\documentclass[11pt]{amsart}

\usepackage{amssymb,amsmath,color,hyperref}
\usepackage[mathscr]{eucal}

\theoremstyle{plain}
\newtheorem{thm}{Theorem}[section]
\newtheorem{theorem}[thm]{Theorem}

\newtheorem{prop}[thm]{Proposition}

\theoremstyle{definition}

\theoremstyle{remark}

\newtheorem{remark}{Remark}
\newtheorem*{remark*}{Remark}

\numberwithin{equation}{section}

        \newcommand{\field}[1]{{\mathbb{#1}}}
        \newcommand{\NN}{\field{N}}
        \newcommand{\ZZ}{\field{Z}}
        
        \newcommand{\RR}{\field{R}}

\begin{document}

\title[Determinantal point processes]{Determinantal point processes associated with the Bochner-Schr\"odinger operator}

\author[Y. A. Kordyukov]{Yuri A. Kordyukov}
\address{Institute of Mathematics, Ufa Federal Research Centre, Russian Academy of Sciences, 112~Chernyshevsky str., 450008 Ufa, Russia} \email{yurikor@matem.anrb.ru}

%
%
\begin{abstract}
We consider the Bochner-Schr\"odinger operator $$H_{p}=\frac 1p\Delta^{L^p}+V$$ on tensor powers $L^p$ of a Hermitian line bundle $L$ on a Riemannian manifold $X$ of bounded geometry under the assumption of non-degeneracy of the curvature form of $L$. For large $p$, the spectrum of $H_p$ asymptotically coincides with the union $\Sigma$ of all local Landau levels of the operator at the points of $X$. We study the determinantal point process on $X$ associated with the spectral projection of $H_p$ corresponding to an interval $I=(\alpha,\beta)$ such that $\alpha,\beta\not \in \Sigma$ and compute the asymptotics of its linear statistics as $p$ goes to infinity. When $X$ is compact, this implies the law of large numbers and central limit theorem for the corresponding empirical measures. 
\end{abstract}


 \maketitle
\section{Introduction}
\subsection{General setting}
Let $(X,g)$ be a complete Riemannian manifold of dimension $d$ and $(L,h^L)$ a Hermitian line bundle on $X$ with a Hermitian connection $\nabla^L$. We suppose that $(X, g)$ is  a manifold of bounded geometry and $L$ has bounded geometry. This means that the curvatures $R^{TX}$ and $R^L$ of the Levi-Civita connection $\nabla^{TX}$ and connection $\nabla^L$, respectively, and their derivatives of any order are uniformly bounded on $X$ in the norm induced by $g$ and $h^L$, and the injectivity radius $r_X$ of $(X, g)$ is positive.

For any $p\in \NN$, let $L^p:=L^{\otimes p}$ be the $p$th tensor power of $L$ and let
\[
\nabla^{L^p}: {C}^\infty(X,L^p)\to
{C}^\infty(X, T^*X \otimes L^p)
\] 
be the Hermitian connection on $L^p$ induced by $\nabla^{L}$. Consider the induced Bochner Laplacian $\Delta^{L^p}$ acting on $C^\infty(X,L^p)$ by
\[
\Delta^{L^p}=\big(\nabla^{L^p}\big)^{\!*}\,
\nabla^{L^p},
\] 
where $\big(\nabla^{L^p}\big)^{\!*}: {C}^\infty(X,T^*X\otimes L^p)\to
{C}^\infty(X,L^p)$ is the formal adjoint of  $\nabla^{L^p}$. Let $V\in C^\infty(X,\RR)$ be a real-valued function. 

We study the Bochner-Schr\"odinger operator $H_p$ acting on $C^\infty(X,L^p)$ by
\[
H_{p}=\frac 1p\Delta^{L^p}+V. 
\] 
The operator $H_p$ is essentially self-adjoint  in the Hilbert space $L^2(X,L^p)$ with initial domain  $C^\infty_c(X,L^p)$. We still denote by $H_p$ its unique self-adjoint extension and by $\sigma(H_p)$ its spectrum in $L^2(X,L^p)$.

Consider the real-valued closed 2-form $\mathbf B$ (the magnetic field) given by 
\[
\mathbf B=iR^L. 
\] 
where $R^L$ is the curvature of the connection $\nabla^L $ defined as $R^L=(\nabla^L)^2$. 
We assume that $\mathbf B$ is non-degenerate. Thus, $X$ is a symplectic manifold. In particular, its dimension is even, $d=2n$, $n\in \NN$.

\begin{remark}
 Assume that the Hermitian line bundle $(L,h^L)$ is trivial. Then we can write $\nabla^L=d-i \mathbf A$ with a real-valued 1-form $\mathbf A$ (the magnetic potential), and we have
\[
R^L=-id\mathbf A,\quad \mathbf B=d\mathbf A. 
\]
The operator $H_p$ is related with the semiclassical magnetic Schr\"odinger operator
\[
H_p=\hbar^{-1}[(i\hbar d+\mathbf A)^*(i\hbar d+\mathbf A)+\hbar V], \quad \hbar=\frac{1}{p},\quad p\in \NN. 
\]
It can be also considered as the magnetic Schr\"odinger operator with strong electric and magnetic fields, growing at the same rate: 
\[
H_p=\frac{1}{p}[(d-ip\mathbf A)^*(d-ip\mathbf A)+pV], \quad p\in \NN. 
\]
\end{remark} 

\begin{remark} 
If $X$ is the Euclidean space $\RR^{2n}$ with coordinates $Z=(Z_1,\ldots,Z_{2n})$, we can write the 1-form $\bf A$ as
\[
{\bf A}= \sum_{j=1}^{2n}A_j(Z)\,dZ_j,
\]
the matrix of the Riemannian metric as $g(Z)=(g_{j\ell}(Z))_{1\leq j,\ell\leq 2n}$
and its inverse as $g(Z)^{-1}=(g^{j\ell}(Z))_{1\leq j,\ell\leq 2n}$.
Denote $|g(Z)|=\det(g(Z))$. Then $\bf B$ is given by 
\[
{\bf B}=\sum_{j<k}B_{jk}\,dZ_j\wedge dZ_k, \quad
B_{jk}=\frac{\partial A_k}{\partial Z_j}-\frac{\partial
A_j}{\partial Z_k}.
\]
Moreover, the operator $H_p$ has the form
\[
H_p=\frac{1}{p}\frac{1}{\sqrt{|g|}}\sum_{1\leq j,\ell\leq 2n}\left(i \frac{\partial}{\partial Z_j}+pA_j\right) \left[\sqrt{|g|} g^{j\ell} \left(i \frac{\partial}{\partial Z_\ell}+pA_\ell\right)\right]+V.
\]
Our assumptions hold, if the matrix $(B_{j\ell}(Z))$ has full rank $2n$ and its eigenvalues are separated from zero uniformly on $Z\in \RR^{2n}$, for any $\alpha \in \ZZ^{2n}_+$ and $1\leq j,\ell\leq 2n$, we have 
\[
\sup_{Z\in \RR^{2n}}|\partial^\alpha g_{j\ell}(Z)|<\infty, \quad \sup_{Z\in \RR^{2n}}|\partial^\alpha B_{j\ell}(Z)|<\infty, 
\]
and the matrix $(g_{j\ell}(Z))$ is positive definite uniformly on $Z\in \RR^{2n}$.
 \end{remark}

As shown in \cite{Kor22} (see also \cite{charles20b}), the spectrum of $H_{p}$ as $p\to \infty$ admits the following asymptotic description in terms of the spectra of the model operators.

For an arbitrary $x_0\in X$, the model operator at $x_0$ is a second order differential operator $\mathcal H^{(x_0)}_{p}$, acting on $C^\infty(T_{x_0}X)$, which is obtained from the operator $H_p$ by freezing coefficients at $x_0$.  

For $Y\in C^\infty(T_{x_0}X, T(T_{x_0}X))$ and $p\in\mathbb N$, introduce the covariant derivative
\[
\nabla^{(x_0)}_{p,Y} : C^\infty(T_{x_0}X)\to C^\infty(T_{x_0}X)
\] 
by the formula
\begin{equation}\label{eq1.1}
\nabla^{(x_0)}_{p,Y}=\nabla_Y-ip\alpha^{(x_0)}(Y), 
\end{equation}
where $\nabla_Y$ denotes the differentiation operator on $T_{x_0}X$ in the direction $Y$ and the connection one-form $\alpha^{(x_0)}\in C^\infty(T_{x_0}X,T^*(T_{x_0}X))$ is given by 
\begin{equation}\label{eq1.2}
\alpha^{(x_0)}_v(w)=\frac{1}{2}\mathbf B_{x_0}(v,w),\quad v\in T_{x_0}X,\quad w\in T_v(T_{x_0}X), 
\end{equation}
and we identify $v\in T_{x_0}X$ with the corresponding element in $T_v(T_{x_0}X)$. 

The curvature of $\nabla^{(x_0)}_p$ is constant: $d\alpha^{(x_0)}=\mathbf B_{x_0}$.  Denote by $\Delta^{(x_0)}_{p}$ the associated Bochner Laplacian. The model operator $\mathcal H^{(x_0)}_{p}$ acting on $C^\infty(T_{x_0}X)$ is defined as 
\[
\mathcal H^{(x_0)}_{p}=\frac 1p\Delta^{(x_0)}_p+V(x_0).
\]

For ${x_0}\in X$, let $B_{x_0} : T_{x_0}X\to T_{x_0}X$ be the skew-symmetric operator such that 
\[
\mathbf B_{x_0}(u,v)=g(B_{x_0}u,v), \quad u,v\in T_{x_0}X. 
\]
Its eigenvalues have the form $\pm i a_j(x_0), j=1,\ldots,n,$ with $a_j(x_0)>0$.  It is well-known that the spectrum of $\mathcal H^{(x_0)}_{p}$ is independent of $p$ and consists of eigenvalues of infinite multiplicity (Landau levels): 
\[
\sigma(\mathcal H^{(x_0)}_{p})=\Sigma_{x_0}:=\left\{\Lambda_{\mathbf k}({x_0})\,:\, \mathbf k\in\ZZ_+^n \right\}, 
\]
where, for $\mathbf k=(k_1,\cdots,k_n)\in\ZZ_+^n$ and $x_0\in X$,
\begin{equation}\label{eq1.3}
\Lambda_{\mathbf k}(x_0)=\sum_{j=1}^n(2k_j+1) a_j(x_0)+V(x_0).
\end{equation}
In particular, the lowest eigenvalue of $\mathcal H^{(x_0)}_{p}$ is 
\[
\Lambda_0(x_0):=\sum_{j=1}^n a_j(x_0)+ V(x_0). 
\]
Let $\Sigma$ be the union of the spectra of the model operators: 
\[
\Sigma=\bigcup_{x\in X}\Sigma_x=\left\{\Lambda_\mathbf {k}(x)\,:\, \mathbf k\in\ZZ_+^n, x\in X \right\}.
\]

\begin{theorem}[\cite{Kor22}, Theorem 1]\label{th1.1}
For any $K>0$, there exists $c>0$ such that for any $p\in \NN$ the spectrum of $H_{p}$ in the interval  $[0,K]$  is  contained in the $cp^{-1/4}$-neighborhood of $\Sigma$.  
\end{theorem}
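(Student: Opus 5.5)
The plan is a localization argument with a partition of unity at scale $p^{-1/4}$, followed by comparison with the model operators $\mathcal H^{(x_0)}_p$. Let $\lambda\in[0,K]$ with $\operatorname{dist}(\lambda,\Sigma)>cp^{-1/4}$. We must show $\lambda\notin\sigma(H_p)$. It suffices to prove a lower bound
\[
\|(H_p-\lambda)u\|\geq \tfrac12\operatorname{dist}(\lambda,\Sigma)\|u\|, \quad u\in C^\infty_c(X,L^p),
\]
for large $p$. This, together with self-adjointness, gives $\lambda\notin\sigma(H_p)$. Small $p$ is handled by enlarging $c$: for $cp^{-1/4}\geq K$ the statement is empty. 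Since the spectrum is nonnegative below only up to $\inf V$, we actually work with any $\lambda\le K$.

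\emph{Localization.} Using bounded geometry, choose a covering of $X$ by geodesic balls $B(x_j,2p^{-1/4})$ with uniformly bounded multiplicity, and a partition $\sum_j\phi_j^2=1$ with $|\nabla\phi_j|\le Cp^{1/4}$. The IMS localization formula gives
\[
\tfrac1p\Delta^{L^p}=\sum_j\phi_j\tfrac1p\Delta^{L^p}\phi_j-\tfrac1p\sum_j|\nabla\phi_j|^2 ,
\]
and the error is $O(p^{-1/2})$. The same commutator bookkeeping applied to $\|(H_p-\lambda)u\|^2$ shows that
\[
\sum_j\|(H_p-\lambda)\phi_ju\|^2\le \|(H_p-\lambda)u\|^2+\text{err}.
\]
The error involves $[H_p,\phi_j]=O(p^{-1}|\nabla\phi_j|\,|\nabla^{L^p}|)+O(p^{-1}|\Delta\phi_j|)$. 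On the relevant range, $p^{-1/2}\nabla^{L^p}$ is bounded relative to $(H_p+C)^{1/2}$, so the commutator is $O(p^{-1/4})$ times a bounded quantity. Hence err $\le Cp^{-1/4}(\|u\|^2+\|(H_p-\lambda)u\|^2)$, roughly. So it suffices to bound $\|(H_p-\lambda)v\|\ge(\operatorname{dist}(\lambda,\Sigma)-Cp^{-1/4})\|v\|$ for $v$ supported in a ball of radius $2p^{-1/4}$ around $x_0=x_j$.

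\emph{Local comparison.} In normal coordinates at $x_0$, with a trivialization of $L$ by parallel transport along radial geodesics, the connection form is $\alpha^{(x_0)}+O(|Z|^2)$ and the metric is $g_{x_0}+O(|Z|^2)$. Also $V=V(x_0)+O(|Z|)$. Therefore, on functions supported in $|Z|\le 2p^{-1/4}$,
\[
H_p-\mathcal H^{(x_0)}_p=O(|Z|)+O(|Z|^2)\,p^{-1}\nabla_p\nabla_p+O(|Z|^2)\,\nabla_p+p\,O(|Z|^4)+\cdots ,
\]
where the two factors $\nabla_p=\nabla^{(x_0)}_p$ in the second term stand for a generic second-order expression. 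The key scaling facts are the following:
\begin{itemize}
\item the connection error contributes $O(p|Z|^3)\cdot p^{-1}\nabla_p$;
\item $p^{-1/2}\nabla_p$ is controlled by $(\mathcal H_p^{(x_0)}+C)^{1/2}$.
\end{itemize}
With $|Z|\lesssim p^{-1/4}$, these give $p|Z|^3p^{-1/2}\sim p^{-1/4}$, $|Z|\sim p^{-1/4}$ and $|Z|^2\sim p^{-1/2}$. Hence $\|(H_p-\mathcal H^{(x_0)}_p)v\|\le Cp^{-1/4}(\|v\|+\|(\mathcal H^{(x_0)}_p-\lambda)v\|)$. Since $\mathcal H^{(x_0)}_p$ is self-adjoint with spectrum $\Sigma_{x_0}\subset\Sigma$, we get $\|(\mathcal H^{(x_0)}_p-\lambda)v\|\ge\operatorname{dist}(\lambda,\Sigma)\|v\|$, and we conclude.

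\emph{Main obstacle.} The delicate step is the choice of scale and the precise gauge. The trivialization must make the connection deviation $O(|Z|^2)$ rather than $O(|Z|)$, which requires radial gauge together with the fact that the curvature is $\mathbf B_{x_0}+O(|Z|)$. The $p^{-1/4}$ arises from balancing two costs: the localization cost $p^{-1}\epsilon^{-2}$ (relative to the operator, effectively $p^{-1/2}\epsilon^{-1}$) against the freezing cost $\epsilon+p\epsilon^3\cdot p^{-1/2}$. Optimizing with $\epsilon=p^{-1/4}$ gives both costs of order $p^{-1/4}$. Uniformity of all constants in $x_0$ comes from the bounded geometry assumptions. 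The dependence of constants on $K$ enters through the relative bounds of $p^{-1/2}\nabla$ by $H_p-\lambda$ for $\lambda\le K$.
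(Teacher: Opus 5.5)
The paper does not prove this statement; it is quoted from \cite{Kor22}. Judged on its own, your proposal fails at the local comparison step at scale $p^{-1/4}$.

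In the radial gauge the connection form is $\alpha^{(x_0)}+w$, with $w=O(|Z|^2)$ and nonvanishing quadratic part $w_j(Z)=\tfrac13\,\partial_l\mathbf B_{kj}(x_0)Z_kZ_l+O(|Z|^3)$. No other gauge does better, since $dw=\mathbf B-\mathbf B_{x_0}$ has size $|Z|$ whenever $\nabla\mathbf B(x_0)\neq 0$. Writing $\nabla_p=\nabla^{(x_0)}_p$, the magnetic part of $H_p-\mathcal H^{(x_0)}_p$ is
\[
i\bigl(w\cdot\nabla_p+\nabla_p\cdot w\bigr)+p|w|^2 .
\]
This is $O(p|Z|^2)\cdot p^{-1}\nabla_p$ plus $O(p|Z|^4)$, not $O(p|Z|^3)\cdot p^{-1}\nabla_p$ as in your first bullet; your own displayed expansion has the correct powers. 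Since $\|\nabla_pv\|\sim p^{1/2}\|v\|$, both terms are of order one on $|Z|\lesssim p^{-1/4}$, and you never estimate the $p|Z|^4$ term at all. This is sharp, not an artifact of crude bounds. Take a cut-off lowest-level coherent state $v$ of $\mathcal H^{(x_0)}_p$ centred at $Z_0$ with $|Z_0|\approx p^{-1/4}$, in a direction where the quadratic part of $w$ does not vanish. Then $w(Z_0)\cdot\nabla_pv$ lies in the next Landau level, so it is orthogonal to $v$. Its norm is comparable to $p^{1/2}|w(Z_0)|\,\|v\|\sim\|v\|$, and nothing cancels it. So the claimed bound
\[
\|(H_p-\mathcal H^{(x_0)}_p)v\|\le Cp^{-1/4}\bigl(\|v\|+\|(\mathcal H^{(x_0)}_p-\lambda)v\|\bigr)
\]
is false, and the triangle inequality against $\operatorname{dist}(\lambda,\Sigma_{x_0})$ gives nothing.

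Shrinking the scale does not rescue the exponent. The honest balance is the freezing cost $\epsilon+p^{1/2}\epsilon^2+p\epsilon^4$ against the localization cost $p^{-1/2}\epsilon^{-1}$. The optimum is $\epsilon=p^{-1/3}$, which proves only a $cp^{-1/6}$-neighbourhood of $\Sigma$.

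Reaching $p^{-1/4}$ needs an extra idea that your proposal lacks: at the magnetic length scale, the dominant part of $w$ is a gauge term. Near a guiding centre $Z_0$ write $w(Z)=w(Z_0)+Dw(Z_0)(Z-Z_0)+O(|Z-Z_0|^2)$. The constant term and the symmetric part of $Dw(Z_0)$ are removed by a phase $e^{ip\varphi_{Z_0}}$ with $\varphi_{Z_0}$ quadratic. What remains is, up to $O(\epsilon)$, the model operator at $\exp^X_{x_0}(Z_0)$, whose spectrum lies in $\Sigma$. Because the phase depends on $Z_0$, this must be done for all guiding centres at once. One way is to conjugate by a unitary built from the model's guiding-centre operators. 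Another is a Feshbach-type reduction in which the second-order contribution of $i(w\cdot\nabla_p+\nabla_p\cdot w)$ cancels the leading part of $p|w|^2$; for constant $w$ this cancellation is exact.

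A smaller point concerns your localization step. The error in $\sum_j\|(H_p-\lambda)\phi_ju\|^2$ should be $O(p^{-1}\epsilon^{-2})=O(p^{-1/2})$ times $\|u\|^2+\|(H_p-\lambda)u\|^2$. This follows from $\sum_j\phi_jS\phi_j=S-\frac12\sum_j[\phi_j,[\phi_j,S]]$ with $S=(H_p-\lambda)^2$. The bound $Cp^{-1/4}(\cdots)$ you wrote is too weak and would only give a $p^{-1/8}$-neighbourhood.
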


Take an interval $I=(\alpha,\beta)$ such that $\alpha,\beta\not \in \Sigma$. 
By Theorem \ref{th1.1}, there exists $\mu_0>0$ and $p_0\in \NN$ such that for any $p>p_0$ 
\[
\sigma(H_{p})\subset (-\infty, \alpha-\mu_0) \cup I \cup (\beta+\mu_0, \infty).
\] 
Let $P_{p,I}$ be the spectral projection of $H_{p}$ associated with $I$:
\[
P_{p,I} : L^2(X,L^p)\to \mathcal H_p=\operatorname{Im} P_{p,I}. 
\]
We will consider the manifold $X$ equipped with the Riemannian volume form $dv_X$ and the determinantal point process on $X$ associated with the projection $P_{p,I}$ for $p>p_0$. Since $P_{p,I}$ is an integral operator with smooth kernel, it is of locally trace class, and, therefore, the determinantal point process on $X$ associated with $P_{p,I}$ exists (see Theorem~\ref{th2.1} below). 

\subsection{Statement of the main results}
For any measurable function $f: X\to \mathbb R$, denote by $\mathcal N_p[f]$, $p>p_0$, the linear statistic of the determinantal point process on $X$ associated with the projection $P_{p,I}$ (see the formula \eqref{eq2.2} below). First, we compute the asymptotics of the distribution of $\mathcal N_p[f]$ as $p$ goes to infinity.

Let 
$$
\mathcal K_I:=\{\mathbf k\in \ZZ^n_+ : \Lambda_{\mathbf k}(x)\in I\}.
$$
Under current assumption on $I$, this set is independent of $x\in X$.

Fix $x\in X$. Recall that $\pm i a_j(x), j=1,\ldots,n,$ with $a_j(x)>0$ are the eigenvalues of $B_{x}$. Choose an orthonormal base $\{e_j : j=1,\ldots,2n\}$ in $T_{x}X$ such that  
\[
B_{x}e_{2k-1}=a_k(x)e_{2k}, \quad B_{x}e_{2k}=-a_k(x)e_{2k-1},\quad k=1,\ldots,n. 
\]
For any $m=1,\ldots,n$, introduce a function $I_m : \mathbb Z^n_+\times \mathbb Z^n_+\to \mathbb Z$, setting, for $\mathbf k,\mathbf k^\prime\in \mathbb Z^n_+$,
\begin{equation}
\begin{aligned} \label{eq1.4}
I_m(\mathbf k,\mathbf k)& =2k_m+1,\\
I_m(\mathbf k+\epsilon_j,\mathbf k)=I_m(\mathbf k,\mathbf k+\epsilon_j)& =-(k_j+1)\delta_{jm}, \quad j=1,\ldots,n, \\
I_m(\mathbf k,\mathbf k^\prime)& =0, \quad |\mathbf k-\mathbf k^\prime|>1,
\end{aligned}
\end{equation}
where $(\epsilon_1,\ldots,\epsilon_n)$ is the standard basis in $\mathbb Z^n$. 

Set                                             
\[
\alpha_m=\sum_{\mathbf k^\prime,\mathbf k^{\prime\prime}\in \mathcal K_I} I_m(\mathbf k^\prime,\mathbf k^{\prime\prime}).
\]                               

For any $f \in C^1(X)$, define 
\[
|df(x)|_{I}^2= \sum_{m=1}^{n} \frac{\alpha_m}{a_m(x)}   \left[\left(\nabla_{e_{2m-1}}f(x)\right)^2+\left(\nabla_{e_{2m}}f(x)\right)^2\right],
\]
where, for any $v\in T_xX$, $\nabla_vf(x)$ denotes the derivative of $f$ in the direction of $v$.  
One can show that the function $x\in X\mapsto |df(x)|_{I}^2$ is a well-defined continuous function on $X$,  which is smooth if $f \in C^\infty(X)$ (see the formula \eqref{eq4.3} below).

 Let $\Omega_{\mathbf B}=\frac{1}{n!} \mathbf B^n$ be the Liouville volume form on $X$. 

\begin{thm}\label{th1.2}
For any real-valued $f \in L^1(X,dv_X)$, the expectation of the linear statistics $\mathcal N_p[f]$ is finite for all $p>p_0$, and satisfies the following asymptotics as $p\to \infty$,
\begin{equation}\label{eq1.5}
\mathbb E[\mathcal N_p[f]] = \frac{p^n}{(2\pi)^n}|\mathcal K_I| \int_X f(x)\, \Omega_{\mathbf B}(x) + o(p^n) . 
\end{equation}
For any $f \in C^1_c(X, \mathbb R)$,  the variance of the linear statistics
$\mathcal N_p[f]$ satisfies the following asymptotics as $p\to \infty$,
\begin{equation}\label{eq1.6}
\mathbb V[\mathcal N_p[f]]=\frac{1}{4\pi}\frac{p^{n-1}}{(2\pi)^{n-1}}\int_X|df(x)|_{I}^2\,\Omega_{\mathbf B}(x)+ o(p^{n-1}).    
\end{equation}
\end{thm}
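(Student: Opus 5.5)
The plan rests on the two standard identities for a determinantal process with Hermitian projection kernel $P_{p,I}(x,y)$:
\[
\mathbb E[\mathcal N_p[f]]=\int_X f(x)P_{p,I}(x,x)\,dv_X(x),
\]
\[
\mathbb V[\mathcal N_p[f]]=\frac12\iint_{X\times X}(f(x)-f(y))^2|P_{p,I}(x,y)|^2\,dv_X(x)\,dv_X(y).
\]
The second uses the reproducing property $\int |P_{p,I}(x,y)|^2dv_X(y)=P_{p,I}(x,x)$. Everything then reduces to the asymptotics of the Bergman-type kernel $P_{p,I}$. We take as input the local description available from \cite{Kor22}: in normal coordinates $Z$ near $x$, and after a unitary trivialization of $L^p$, we have
\[
p^{-n}P_{p,I}(Z,Z')=\mathcal P^{(x)}_{I}(\sqrt p Z,\sqrt p Z')\,\kappa^{-1/2}(Z)\kappa^{-1/2}(Z')+O(p^{-1/2}),
\]
uniformly for $|Z|,|Z'|<cp^{-1/2+\epsilon}$, where $\mathcal P^{(x)}_I$ is the spectral projection of the model operator $\mathcal H^{(x)}_1$ onto the Landau levels $\Lambda_{\mathbf k}(x)$, $\mathbf k\in\mathcal K_I$. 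We also use off-diagonal decay $|P_{p,I}(x,y)|\le C_Np^n(1+\sqrt p\,d(x,y))^{-N}$. Both facts are obtained by functional calculus: the spectral gaps at $\alpha$ and $\beta$ let us write $P_{p,I}=\varphi(H_p)$ with smooth compactly supported $\varphi$, and then we combine Helffer–Sjöstrand with weighted resolvent estimates.

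\textbf{Expectation.} On the diagonal, $\mathcal P^{(x)}_I(0,0)=|\mathcal K_I|\,(2\pi)^{-n}\prod_j a_j(x)$, since each Landau level has density $\prod a_j/(2\pi)$ with respect to $dv_X$. Because $\prod_j a_j(x)\,dv_X=\Omega_{\mathbf B}$, we get $P_{p,I}(x,x)=p^n(2\pi)^{-n}|\mathcal K_I|\,\Omega_{\mathbf B}/dv_X+O(p^{n-1/2})$ uniformly. Bounded geometry also gives $P_{p,I}(x,x)\le Cp^n$ uniformly, so the expectation is finite for $f\in L^1$. For general $f\in L^1$ the error is $o(p^n)$ by dominated convergence: the rescaled density converges pointwise uniformly and is uniformly bounded.

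\textbf{Variance.} For $f\in C^1_c$ we Taylor expand $f(y)-f(x)=df_x(Z)+O(|Z|^2)$. The off-diagonal decay restricts the integral to $|Z|\lesssim p^{-1/2+\epsilon}$. Rescaling $Z=u/\sqrt p$ gives
\[
\mathbb V=\frac{p^{n-1}}{2}\int_X\int_{\mathbb R^{2n}}(df_x(u))^2|\mathcal P^{(x)}_I(0,u)|^2\,du\,dv_X(x)+o(p^{n-1}).
\]
The remaining step is the model computation
\[
\int (\langle\xi,u\rangle)^2|\mathcal P^{(x)}_I(0,u)|^2du,
\]
which should produce the coefficients $\alpha_m/a_m$. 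To do it we write $\mathcal P_I=\sum_{\mathbf k\in\mathcal K_I}\mathcal P_{\mathbf k}$ and express the coordinate functions $u_{2m-1}\pm iu_{2m}$ through the creation and annihilation operators $b_m,b_m^+$ and the guiding-center operators. This gives
\[
\int u_ju_l|\mathcal P_I(0,u)|^2du=\big(\mathcal P_I\,u_j(1-\mathcal P_I)\,u_l\,\mathcal P_I\big)(0,0)
\]
(a commutator-type identity, since $\mathcal P_I^2=\mathcal P_I$ and the kernel is translation-covariant up to phase). Only the cyclotron part of $u$ contributes: $u_{2m-1},u_{2m}$ act as $(2a_m)^{-1/2}(b_m+b_m^+)$-type operators shifting $k_m$ by $\pm1$, and the guiding-center part commutes with $\mathcal P_I$ and drops out. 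Summing the matrix elements $k_m+1$ over pairs with $\mathbf k\in\mathcal K_I$ and $\mathbf k\pm\epsilon_m\notin\mathcal K_I$ yields exactly $\alpha_m=\sum I_m$, because the diagonal term $2k_m+1$ minus the off-diagonal terms inside $\mathcal K_I$ counts boundary transitions. Off-diagonal mixed terms $u_{2m-1}u_{2m'}$ vanish by rotation invariance in each plane. Tracking the constants $(2\pi)^{-n}\prod a_j$ and $1/(2a_m)$ then gives
\[
\frac{1}{4\pi}\frac{p^{n-1}}{(2\pi)^{n-1}}\frac{\alpha_m}{a_m}|\nabla_{e}f|^2\,\Omega_{\mathbf B}.
\]

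\textbf{Main obstacle.} The hard part is the remainder control in the variance. The leading term is of order $p^{n-1}$, while the naive kernel error is $O(p^{-1/2})$ relative to the main term. We therefore need the kernel expansion to order $p^{-1/2}$ (the first correction is odd in $(Z,Z')$) together with the quadratic Taylor term of $f$. We then have to check that the odd correction integrates to zero against the even weight $(df_x(u))^2$, so that the total error is $o(p^{n-1})$. The first attempt would be to use the parity structure of the first-order term of the expansion in \cite{Kor22}.
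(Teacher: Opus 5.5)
Your overall route is the paper's: the formulas \eqref{eq4.1} and \eqref{eq4.2}, localization to $d(x,y)<p^{-1/2+\epsilon}$, and replacement of $P_{p,I,x}(0,Z)$ by $p^n\mathcal P_{I,x}(0,\sqrt p Z)\kappa_x^{-1/2}(Z)$ from Theorem~\ref{th3.2}, where the $\kappa_x$ factors cancel against $dv_X=\kappa_x\,dv_{TX}$. Rescaling and Taylor expansion of $f$ follow; your expectation part is fine.

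The genuine difference is the model integral. The paper expands $|\mathcal P_{I,x}(0,Z)|^2$ via \eqref{eq3.1}--\eqref{eq3.2}, passes to polar coordinates, and uses Laguerre orthogonality and the three-term recurrence; that is where $I_m(\mathbf k',\mathbf k'')$ comes from. Your ladder-operator route is more conceptual. It correctly identifies $\alpha_m=\sum_{\mathbf k\in\mathcal K_I}\big[(k_m+1)\mathbf 1_{\{\mathbf k+\epsilon_m\notin\mathcal K_I\}}+k_m\mathbf 1_{\{\mathbf k-\epsilon_m\notin\mathcal K_I\}}\big]$, which makes $\alpha_m\ge 0$ evident.

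However, the identity you base it on is off by a factor $2$. Take $f=u_j$ and $D=[f,\mathcal P_I]$, whose kernel is $(u_j-u_j')\mathcal P_I(u,u')$. Then $\int u_j^2|\mathcal P_I(0,u)|^2du=(D^*D)(0,0)$, and since $D^*=-D$,
\[
D^*D=\mathcal P_If(1-\mathcal P_I)f\mathcal P_I+(1-\mathcal P_I)f\mathcal P_If(1-\mathcal P_I).
\]
The two summands are $-AB$ and $-BA$, with $A=\mathcal P_ID(1-\mathcal P_I)$ and $B=(1-\mathcal P_I)D\mathcal P_I$ magnetic-translation covariant. The trace per unit area is tracial, so the two summands have equal diagonal values. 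Hence
\[
\int u_j^2|\mathcal P_I(0,u)|^2du=2\,(\mathcal P_Iu_j(1-\mathcal P_I)u_j\mathcal P_I)(0,0).
\]
Check with $n=1$ and $\mathcal K_I=\{0\}$. The left side is $1/(2\pi)$. The right side without the factor $2$ is $(\mathcal P_Iu_1(1-\mathcal P_I)u_1\mathcal P_I)(0,0)=\mathcal P_I(0,0)\cdot\frac{1}{2a}=\frac{1}{4\pi}$, since only the transition to the first excited level contributes. Fed literally into your prefactor $p^{n-1}/2$, your identity gives half of \eqref{eq1.6}. The constant you state is right, but it does not follow from what you wrote.

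Second, your ``main obstacle'' does not exist. By \eqref{eq3.5} with $j=0$, $|R_{0,p,x}(0,Z)|\le Cp^{-1/2}(1+\sqrt p|Z|)^Me^{-c\sqrt p|Z|}+O(e^{-c_0\sqrt p})$. So the cross term $p^{2n}\int|\mathcal P_{I,x}(0,\sqrt pZ)|\,|R_{0,p,x}(0,Z)|\,(f_x(0)-f_x(Z))^2\,dv_{TX}(Z)$ is $O(p^{2n}\cdot p^{-1/2}\cdot p^{-1}\cdot p^{-n})=O(p^{n-3/2})=o(p^{n-1})$. The paper stops there, with no next-order term and no parity argument.

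Also, for $f\in C^1_c$ there is no quadratic Taylor term to exploit. What you actually have is $f_x(Z)-f_x(0)-df(x)(Z)=o(|Z|)$ uniformly in $x$, by uniform continuity of $df$ on its compact support and bounded geometry. This already gives $o(p^{n-1})$, whereas any parity improvement would need more regularity of $f$. The paper's $\mathcal O(|Z|^2)$ is the same shortcut.

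Finally, localize with the exponential bound of Theorem~\ref{th3.1} rather than $C_Np^n(1+\sqrt p\,d)^{-N}$. Bounded geometry allows exponential volume growth, and polynomial decay alone does not make $\int_{d(x,y)>p^{-1/2+\epsilon}}|P_{p,I}(x,y)|^2dv_X(y)$ small.
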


Now we assume that $X$ is compact. Then the projection $P_{p,I}$ is a finite rank projection. In this case, the associated determinantal point process is often called an orthogonal ensemble. It can be constructed as follows (see, for instance, \cite[Lemma 4.5.1]{HKPV09}). 

Set $N_p:=\dim \mathcal H_p$.  Let $\{s_j \in \mathcal H_p: j=1,\ldots, N_p\}$ be an orthonormal basis in $\mathcal H_p$. We define the Slater determinant as the section $\Psi_p \in C^\infty(X^{N_p}, (L^p)^{\boxtimes N_p})$ of $(L^p)^{\boxtimes N_p}$ over the $N_p$-fold product $X^{N_p}$ given for any $(x_1, x_2, \ldots, x_{N_p}) \in X^{N_p}$ by
\begin{equation}\label{eq1.7}
\Psi_p(x_1, x_2, \ldots, x_{N_p}) := \det(s_j(x_i))_{i,j=1}^{N_p}, 
\end{equation}
which does not depend on the choice of orthonormal basis of $\mathcal H_p$.

One can show that the measure $d\nu_{N_p}$ on $X^{N_p}$ given by
\[
d\nu_{N_p}:=\frac{1}{N_p!} |\Psi_p|^2_{h^{L^p}} dv_X^{N_p}, 
\]
defines a probability measure on $X^{N_p}$. 
 
For any $p \in \mathbb N$ and any $(x_1, x_2, \ldots, x_{N_p}) \in X^{N_p}$, the Slater determinant
\eqref{eq1.7} satisfies the formula
\[
|\Psi_p(x_1, x_2, \ldots, x_{N_p})|^2_{h^{L^p}} = \det(P_{p,I}(x_i, x_j))_{i,j=1}^{N_p}.
\]
Using this formula, one can check that $(X^{N_p}, d\nu_{N_p})$ is the determinantal process associated with $P_{p,I}$. 

For any function $f : X\to \mathbb R$ and any $p \in \mathbb N$, the associated linear statistics is the random variable over $(X^{N_p}, d\nu_{N_p})$ defined by 
\[
\mathcal N_p[f](x_1, x_2, \ldots, x_{N_p})=\sum_{j=1}^{N_p}f(x_j),\quad (x_1, x_2, \ldots, x_{N_p})\in X^{N_p}.
\]

\begin{thm}[Law of Large Numbers]\label{th1.3}
For any real-valued $f \in L^\infty(X)$, we have the following convergence in probability as $p\to \infty$,
\begin{equation}\label{eq1.8}
\frac{1}{N_p}\mathcal N_p[f]  \to\frac{1}{{\rm Vol}_{\mathbf B}(X)} \int_Xf(x) \Omega_{\mathbf B}(x),
\end{equation}
where
\[
{\rm Vol}_{\mathbf B}(X)=\int_X  \Omega_{\mathbf B}(x).
\]
\end{thm}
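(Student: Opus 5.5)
The plan is a Chebyshev argument built on Theorem~\ref{th1.2}, with an extra step to handle $f\in L^\infty$ rather than $f\in C^1$. First note that $N_p=\mathbb E[\mathcal N_p[1]]$: the constant $1$ lies in $L^1(X)$ because $X$ is compact, so \eqref{eq1.5} gives
\[
N_p=\frac{p^n}{(2\pi)^n}|\mathcal K_I|\,{\rm Vol}_{\mathbf B}(X)+o(p^n).
\]
This requires $\mathcal K_I\neq\emptyset$. That is implicit in the statement, since otherwise $N_p=o(p^n)$ and the normalization is meaningless; I would assume it, or argue that $N_p\ge 1$ and the statement needs it. Applying \eqref{eq1.5} to $f\in L^\infty\subset L^1$ then gives
\[
\frac{1}{N_p}\mathbb E[\mathcal N_p[f]]\to \frac{1}{{\rm Vol}_{\mathbf B}(X)}\int_X f\,\Omega_{\mathbf B}.
\]
So it remains to show that $\frac{1}{N_p}\big(\mathcal N_p[f]-\mathbb E\mathcal N_p[f]\big)\to 0$ in probability.

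For this I would bound the variance directly, using a general fact about determinantal processes with a projection kernel rather than \eqref{eq1.6}. The standard formula is
\[
\mathbb V[\mathcal N_p[f]]=\int_X f(x)^2P_{p,I}(x,x)\,dv_X-\iint |P_{p,I}(x,y)|^2f(x)f(y)\,dv_X\,dv_X,
\]
which is presumably recorded in Section 2 along with \eqref{eq2.2}. Since $P_{p,I}$ is a projection, $\int|P_{p,I}(x,y)|^2dv_X(y)=P_{p,I}(x,x)$, and the formula rewrites as
\[
\mathbb V[\mathcal N_p[f]]=\tfrac12\iint |P_{p,I}(x,y)|^2(f(x)-f(y))^2\,dv_X\,dv_X\le 2\|f\|_\infty^2 N_p.
\]
Chebyshev's inequality then gives
\[
\mathbb P\big(|\mathcal N_p[f]-\mathbb E\mathcal N_p[f]|>\varepsilon N_p\big)\le \frac{2\|f\|_\infty^2}{\varepsilon^2N_p}\to 0,
\]
because $N_p\sim cp^n\to\infty$. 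This argument avoids needing $f\in C^1$ and any approximation argument.

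The only delicate point is to confirm the variance identity for the finite-rank orthogonal ensemble and that the kernel is reproducing. Both are standard, and both follow from the determinantal correlation functions $\det(P_{p,I}(x_i,x_j))$ described above. Alternatively, one could prove the result for $f\in C^1$ via \eqref{eq1.6}, which gives variance $O(p^{n-1})=o(N_p^2)$, and then approximate $f\in L^\infty$ in $L^1$ by smooth functions, using $\mathbb E|\mathcal N_p[f-g]|\le\mathbb E\mathcal N_p[|f-g|]$ together with \eqref{eq1.5}. The direct variance bound is simpler, so I would use that.
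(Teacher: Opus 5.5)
Your proposal is correct and follows essentially the same route as the paper: the limit of the normalized expectation comes from \eqref{eq1.5}, the bound $\mathbb V[\mathcal N_p[f]]\le 2\|f\|_\infty^2 N_p$ comes from \eqref{eq2.4} together with the reproducing property of $P_{p,I}$, and Chebyshev's inequality finishes the argument. The one difference is how you get $N_p\sim\frac{p^n}{(2\pi)^n}|\mathcal K_I|\,{\rm Vol}_{\mathbf B}(X)$: you apply \eqref{eq1.5} to $f\equiv 1$, using $N_p=\operatorname{Tr}P_{p,I}$, whereas the paper cites Demailly's result; your version is self-contained and equally valid, and your remark that $\mathcal K_I\neq\emptyset$ is implicitly needed is a fair one.
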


\begin{thm}[Central Limit Theorem]\label{th1.4}
For any $f \in C^1_c(X, \mathbb R)$, the random variable 
\[
N^{\frac{1}{2n}+\frac 12}_p\frac{\mathcal N_p[f]-\mathbb E[\mathcal N_p[f]]}{N_p}
\]
converges in distribution as $p\to \infty$ to a centered normal random variable $N(0,\sigma^2)$ with variance 
\begin{equation}\label{eq1.9}
\sigma^2=\frac{1}{4\pi}\frac{1}{\left(|\mathcal K_I|{\rm Vol}_{\mathbf B}(X)\right)^{(n-1)/n}} \int_X|df(x)|_{I}^2\,\Omega_{\mathbf B}(x).
\end{equation}
\end{thm}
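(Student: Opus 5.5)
We will prove Theorem~\ref{th1.4} by the cumulant method for determinantal point processes (Soshnikov's approach), using the variance asymptotics \eqref{eq1.6} of Theorem~\ref{th1.2} together with the off-diagonal decay and near-diagonal asymptotics of the kernel $P_{p,I}(x,y)$.

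\textbf{Step 1: reduction of the normalization.} By \eqref{eq1.5} with $f\equiv 1$, which is allowed since $X$ is compact, we have $N_p=\frac{p^n}{(2\pi)^n}|\mathcal K_I|{\rm Vol}_{\mathbf B}(X)+o(p^n)$. Hence
\[
N_p^{\frac{1}{2n}+\frac12}/N_p=N_p^{\frac{1}{2n}-\frac12}\sim \left(\frac{p^n}{(2\pi)^n}|\mathcal K_I|{\rm Vol}_{\mathbf B}(X)\right)^{-\frac{n-1}{2n}}.
\]
Multiplying by \eqref{eq1.6}, the variance of the normalized variable tends to
\[
\frac{1}{4\pi}\frac{p^{n-1}}{(2\pi)^{n-1}}\cdot\frac{(2\pi)^{n-1}}{p^{n-1}\left(|\mathcal K_I|{\rm Vol}_{\mathbf B}(X)\right)^{(n-1)/n}}\int_X|df|_I^2\,\Omega_{\mathbf B},
\]
which is exactly $\sigma^2$ in \eqref{eq1.9}. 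It therefore suffices to show that $(\mathcal N_p[f]-\mathbb E\mathcal N_p[f])/\sqrt{\mathbb V[\mathcal N_p[f]]}$ converges to $N(0,1)$. If $\sigma^2=0$, the statement follows directly from Chebyshev's inequality, so we assume $\sigma^2>0$.

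\textbf{Step 2: cumulants.} For a determinantal process with Hermitian projection kernel $K=P_{p,I}$, the $m$-th cumulant is
\[
\kappa_m(\mathcal N_p[f])=\sum_{j=1}^m\frac{(-1)^{j-1}}{j}\sum_{\substack{m_1+\dots+m_j=m\\ m_i\ge1}}\frac{m!}{m_1!\cdots m_j!}\operatorname{Tr}\big(f^{m_1}K f^{m_2}K\cdots f^{m_j}K\big).
\]
We will show that $\kappa_m=o(p^{(n-1)m/2})$ for every $m\ge3$. Since $\kappa_2\asymp p^{n-1}$, this gives convergence of all normalized cumulants to those of $N(0,1)$, and hence convergence in distribution.

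\textbf{Step 3: estimating the cumulants.} Soshnikov's combinatorial identity rewrites $\kappa_m$ as a sum of traces of products of $K$ with commutators $[K,f]$ type differences; since the sum vanishes when $f$ is constant, one gets a gain of one derivative per factor. Concretely, we will write $\kappa_m$ as an integral over $X^m$ of $K(x_1,x_2)\cdots K(x_m,x_1)$ times a combinatorial polynomial in $f(x_1),\dots,f(x_m)$. This polynomial vanishes to order at least $2$ on the diagonal. For $m\ge 3$ one expects it to vanish to higher effective order after symmetrization, but this is the delicate point.

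A more robust route is the following bound. The full Taylor expansion of $K$ near the diagonal is available: $P_{p,I}(x,y)$ equals $p^n$ times a model kernel in the scaled variable $\sqrt p\,(y-x)$ with Gaussian decay, plus $O(p^{n-1/2})$ corrections, and it decays as $O(p^{-\infty})$ off the diagonal. Substituting $x_i=x_1+Z_i/\sqrt p$, each $\operatorname{Tr}(\cdots)$ term becomes $p^n\int_X(\cdots)$. Taylor expanding $f$, the polynomial in the $f(x_i)$ contributes at least order $p^{-1}$ after the cancellation of zeroth and first order terms. The first order terms cancel by oddness or by the reproducing property of the model projection. This yields $\kappa_m=O(p^{n-1})$ for all $m\ge2$. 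Since $p^{n-1}=o(p^{(n-1)m/2})$ for $m\ge3$ when $n\ge2$, this suffices for $n\ge2$.

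\textbf{Main obstacle.} The case $n=1$ is the main difficulty, because there $O(p^{n-1})=O(1)$ is not small. We then need the sharper statement that $\kappa_m\to0$ for $m\ge3$. For this we will compare with the model Landau-level projection on $\mathbb R^2$, which is translation invariant up to a phase. For such a projection, the limit of $\kappa_m$ is computed in the Fourier picture as in Soshnikov's theorem for translation-invariant kernels: the quadratic form in $\nabla f$ reproduces $\kappa_2$, and the higher cumulants vanish in the limit. The required uniform control of the error terms comes from the localized kernel expansion of $P_{p,I}$ that is also used to prove \eqref{eq1.6}.
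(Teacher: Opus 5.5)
For $n\ge 2$ your argument works, but it is not the paper's route. The paper never touches cumulants. It applies Soshnikov's Theorem~1 as a black box, whose hypotheses follow from \eqref{eq1.5}, \eqref{eq1.6} and \eqref{eq4.4} alone: $\mathbb V[\mathcal N_p[f]]\asymp p^{n-1}\to\infty$, $\sup|f|=O(1)$, and $\mathbb E[\mathcal N_p[|f|]]=O(p^n)=O(\mathbb V^{n/(n-1)})$. You instead use $P^2=P$ (with $P=P_{p,I}$) and the decay of Theorem~\ref{th3.1} to get the sharper bound $\kappa_m=O(p^{n-1})=O(\mathbb V)$ for every $m\ge 2$, so the normalized cumulants vanish directly. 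This is longer but self-contained and quantitative. (Deriving $N_p$ from \eqref{eq1.5} with $f\equiv 1$ is a fine substitute for Demailly's result.)

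Step~3 as written is loose for $f\in C^1$: a second-order Taylor expansion is not available, and the first-order term cancels by oddness only for the model kernel. The clean version has three parts:
\begin{itemize}
\item The zeroth-order term vanishes exactly, because $\int P(x_1,x_2)\cdots P(x_j,x_1)\,dx_2\cdots dx_j=P(x_1,x_1)$ for every $j$ and $\sum_j\frac{(-1)^{j-1}}{j}\sum\frac{m!}{m_1!\cdots m_j!}=0$ for $m\ge 2$.
\item The part linear in the differences $f(x_i)-f(x_1)$ reduces, again by $P^2=P$, to a constant times $\iint f(x)^{m-1}(f(y)-f(x))|P(x,y)|^2$. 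After symmetrizing in $x\leftrightarrow y$ this is quadratic in differences.
\item All remaining terms contain at least two differences, so the Lipschitz bound and Theorem~\ref{th3.1} give $O(p^{n-1})$.
\end{itemize}

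The genuine gap is $n=1$, which you only assert. There your bound gives $\kappa_m=O(1)$, the same order as $\kappa_2$, and you never identify the quantity that must vanish. The mechanism you invoke does not apply as stated. The kernel $\mathcal P_{I,x}(Z,Z^\prime)$ is a function of $Z-Z^\prime$ only up to the phase $e^{-\frac i2\mathbf B_x(Z,Z^\prime)}$. Cyclic products therefore pick up area (flux) phases, and they are not the ordinary iterated convolutions handled by Soshnikov's Fourier argument for translation-invariant kernels. Note that the paper does not cover this case either: its proof checks Soshnikov's hypotheses only for $n\ge 2$, since for $n=1$ the variance stays bounded.

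One way to close the gap inside your framework is as follows.
\begin{itemize}
\item For $f\in C^2$, the expansion above combined with Theorem~\ref{th3.2} gives $\kappa_m=p^{n-1}\int_X f^{m-2}Q_{m,x}(df(x))\,dv_X(x)+O(p^{n-3/2})$. Here $Q_{m,x}$ is a quadratic form built from $\mathcal P_{I,x}$ and independent of $f$.
\item Since $\mathcal N_p[f+c]=\mathcal N_p[f]+cN_p$ with $N_p$ deterministic, $\kappa_m$ ($m\ge 2$) is invariant under $f\mapsto f+c$. Comparing coefficients of $c^{m-2}$ gives $\int_X Q_{m,x}(df)\,dv_X=0$ for all $f$.
\item Localization (take $f=\chi\sin(\lambda\phi)$ and let $\lambda\to\infty$) then gives $Q_{m,x}\equiv 0$ for $m\ge 3$, hence $\kappa_m=O(p^{-1/2})$ when $n=1$.
\item The passage to $f\in C^1$ is by approximation, using $\mathbb V[\mathcal N_p[g]]\le Cp^{n-1}\|dg\|_\infty^2$, which follows from \eqref{eq4.2} and Theorem~\ref{th3.1}.
\end{itemize}
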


Theorems \ref{th1.3} and \ref{th1.4} thus state that as $p\to\infty$, the determinantal point processes $(X^{N_p}, d\nu_{N_p})$ associated with the spectral projection $P_{p,I}$  admit the equilibrium measure $$\mu_{\rm eq}=\frac{1}{{\rm Vol}_{\mathbf B}(X)}\Omega_{\mathbf B}$$ on $X$ and satisfy a central limit theorem with fluctuations given by \eqref{eq1.9}.

\subsection{Discussion and related works}
The simplest example of the setting considered in the paper is given by the Euclidean plane $\mathbb R^{2}$ equipped with the standard Euclidean metric and with the trivial Hermitian line bundle $L$ with the Hermitian connection of constant curvature (cf. \eqref{eq1.1} and \eqref{eq1.2}). The Bochner-Schr\"odinger operator is the magnetic Schr\"odinger operator with constant magnetic field (the Landau Hamiltonian). As mentioned above, its spectrum consists of only eigenvalues with infinite multiplicity called Landau levels (see \eqref{eq1.3}). The eigenspace, corresponding to the lowest Landau level $\Lambda_0$, can be identified with the Bargmann-Fock space. The determinantal process associated with the lowest Landau level is the infinite Ginibre point process introduced in \cite{Ginibre65}. The infinite Ginibre point process is the limiting point process of the finite Ginibre ensembles formed by the eigenvalues of an $N\times N$ random matrix with independent and identically distributed complex Gaussians entries. The Ginibre ensemble also has a physical interpretation as electrons of two-dimensional one-component plasma, also known as the Coulomb gas or Jellium, at the special temperature. 
The asymptotics of the linear statistics of the finite Ginibre ensemble when the number of points $N$ goes to infinity were first obtained in \cite{RV07}. 

The determinantal point processes associated with higher Landau levels were introduced in \cite{shirai15}, where they are called Ginibre-type point processes. The corresponding eigenspaces are Fock spaces of polyanalytic functions. Similar determinantal point processes called polyanalytic Ginibre ensembles are discussed in \cite{HH13}. They belong to a more general class of point processes called Weyl-Heisenberg ensembles introduced in \cite{APRT17}. The asymptotics of the linear statistics for the polyanalytic Ginibre process are studied in \cite{HW19}. We refer to \cite{Abreu23,APRT17,FL22,MMM23} for more information on polyanalytic Ginibre point processes.

The case when $(X,g)$ is a compact K\"ahler manifold and $(L,h^L)$ is a positive Hermitian holomorphic line bundle on $X$ with the Chern connection $\nabla^L$ was first considered by Berman \cite{Berman14,Berman18}. In this case, the Bochner Laplacian coincides with twice the holomorphic Kodaira Laplacian, and the eigenspace corresponding to its lowest eigenvalue is the space of holomorphic sections of $L$. The associated correlation kernel is the Bergman kernel. In fact, in \cite{Berman14,Berman18}, the curvature of $(L,h^L)$ is not assumed to be smooth nor positive, and one simply restricts the convergence to the ``bulk'', a subset of $X$ where the curvature is positive. 

In this setting the limit of a large number $N$ of particles corresponds to the limit when the line bundle $L$ is replaced by a large tensor power $L^p$. When $X$ is the complex projective space, this is just a geometric formulation of the theory of (weighted) multivariate orthogonal polynomials, with the tensor power $p$ corresponding to the degree of the polynomials. 

The asymptotics of the linear statistics of the associated determinantal point process are computed in \cite{Berman18}. The law of large numbers and central limit theorem for the corresponding empirical measures are also established (see also \cite{Lemoine22} for closely related results and refinements). In \cite[Cor. 1.7]{Charles-Estienne20}, Charles and Estienne computed the asymptotics of the linear statistics with respect to the characteristic function of an open subset $U \subset X$ with smooth boundary in the setting of \cite{Berman14,Berman18}. In particular, they establish a central limit theorem in this case. In \cite{Ioos}, Ioos introduced finite dimensional determinantal point processes associated with a circle action on a prequantized K\"ahler manifold of bounded geometry, which can be considered as a generalization of the finite Ginibre ensembles, and computed the asymptotics of its linear statistics. Finally, we mention a recent study of the determinantal processes associated with general Berezin-Toeplitz operators on $\mathbb C$ \cite{DL25a}. 

The results of this paper extend of the results of Berman \cite{Berman18} on the asymptotics of the linear statistics for positive line bundles to the case of an arbitrary magnetic Schr\"odinger operator. The proofs are based on the asymptotic estimates of the Schwartz kernels of the spectral projection $P_{p,I}$ obtained in \cite{Kor22}. 

We consider the simplest case when the endpoints of the interval $I$ belong to spectral gaps of the operator. In this case,  the support of the equilibrium measure appearing in the law of large numbers \eqref{eq1.8}, which is often called the droplet, coincides with the whole space $X$. So the asymptotic results of this paper can be considered as asymptotics in the bulk. When $I$ is an arbitrary interval, the droplet is non-trivial and the contribution of the boundary of the droplet should be also considered. This will be done elsewhere.  

The paper is organized as follows. In Section~\ref{s2}, we recall some basic information on determinantal point processes.  In Section~\ref{s3}, we recall the results on the asymptotic behavior of the Schwartz kernels of the spectral projection $P_{p,I}$ obtained in \cite{Kor22}. In Section~\ref{s4}, we give the proofs of the main results of the paper. In Section~\ref{s5}, we consider some particular cases of the asymptotic formula \eqref{eq1.6} for the variance. 

\section{Determinantal point processes}\label{s2}

In this section, we recall some basic information on determinantal point processes.
Determinantal point processes were introduced by Macchi \cite{Mac75,Mac77} in the 70s as a mathematical model for fermions in quantum mechanics and have been widely studied in many
settings, see \cite{AGZ10,HKPV09} for references and background.

Let $E$ be a second countable locally compact Hausdorff space, $B$ the $\sigma$-algebra of Borel subsets and $\mu$ a Radon measure on $(E,B)$. We denote by $\Lambda$ the set of non-negative integer-valued Radon measures on $E$ equipped with the topology generated by the functions
\[
\xi\in E\mapsto \xi(U)\in \mathbb Z_+.
\]
Equivalently, one can think of $\Lambda$ as the space of locally finite configurations $\xi = (x_i)_{i=-\infty}^{\infty}$ of points in $E$ (roughly speaking, $\xi=\sum_i\delta_{x_i}$), satisfying, for any compact $K \subset E$, the condition 
\[
\#_K(\xi) := \#\{i\in \mathbb Z : x_i \in K\}< +\infty.
\]
A $\sigma$-algebra $\mathcal F$ of measurable subsets of $\Lambda$ is generated by the cylinder sets $C_nB =\{\xi\in \Lambda : \#_B(\xi) = n\}$, where $B$ is a Borel set with a compact closure and $n \in\mathbb Z_+$. Let $P$ be a probability measure on $(\Lambda,\mathcal F)$. A triple $(\Lambda,\mathcal F ,P)$ is called a random point process on $E$. 

A locally integrable function $\rho_k : E^k\to \mathbb R_+$ is called the $k$-point correlation function of a random point process $(\Lambda,\mathcal F ,P)$ if, for any Borel subsets $A_1,\ldots, A_m$ of $E$ and for any $k_i \in \mathbb Z_+, i =1,\ldots, m$, such that $\sum_{i=1}^mk_i=k$, the
following formula holds:
\[
\mathbb E\left(\prod_{i=1}^m \frac{(\#_{A_i})!}{(\#_{A_i}-k_i)!}\right) = \int_{A_1^{k_1}\times \ldots A^{k_m}_m}
\rho_k(x_1,\ldots, x_k) d\mu(x_1)\ldots d\mu(x_k).
\] 
In particular, for any disjoint Borel subsets $A_1,\ldots, A_k$ of $E$, we have
\[
\mathbb E\left(\prod_{i=1}^k \#_{A_i}\right)= \int_{A_1\times \ldots A_k}\rho_k(x_1,\ldots, x_k) d\mu(x_1)\ldots d\mu(x_k).
\]

Let $K:L^2(E, d\mu) \to L^2(E, d\mu)$ be a self-adjoint, locally trace-class operator defined by the integral kernel $K(x,y)$. Recall that $K$ is of locally trace class, if, for any compact set $A\subset E$, $K_A := \chi_AK\chi_A$ is of trace class, where $\chi_A$ is the multiplication operator of the indicator function of the set $A$. 

A random point process on $E$ is said to be determinantal (or fermion) if its $n$-point correlation functions are of the form
\begin{equation}\label{eq2.1}
\rho_k(x_1,\ldots, x_k)=\det(K(x_i,x_j)_{1\leq i,j\leq k}), \quad (x_1,\ldots, x_k)\in E^k. 
\end{equation}
The kernel $K(x,y)$ is called a correlation kernel.

The following theorem provides a sufficient condition on $K$ for the existence of the associated determinantal point process.

\begin{thm}[\cite{SoshnikovUMN}]\label{th2.1}
Let $K:L^2(E, d\mu) \to L^2(E, d\mu)$ be a self-adjoint, locally trace-class operator. Then $K$
determines a determinantal point process on $E$ if and only if $0 \leq K \leq 1$. If this random point process exists, then it is unique.
\end{thm}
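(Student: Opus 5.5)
We argue separately for necessity, sufficiency and uniqueness, and everything is reduced to compact sets $A\subset E$. For such $A$, the operator $K_A=\chi_AK\chi_A$ is self-adjoint and of trace class. We write $K_A=\sum_i\lambda_i\,\varphi_i\otimes\overline{\varphi_i}$, with $(\varphi_i)$ orthonormal in $L^2(A,d\mu)$ and $\sum_i|\lambda_i|<\infty$. We fix the version of the kernel given by this expansion, so that the diagonal values $K(x,x)$ and the determinants in \eqref{eq2.1} make sense $\mu$-a.e. One first checks that these versions are compatible for $A\subset A'$, since $K_A=\chi_AK_{A'}\chi_A$.

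\emph{Necessity.} Assume a determinantal process with kernel $K$ exists and fix a compact $A$. We will use the generating function of $\#_A$. Expanding $(1-z)^{\#_A}$ in factorial powers of $\#_A$ and using \eqref{eq2.1} gives
\[
\mathbb E\big[(1-z)^{\#_A}\big]=\sum_{k\ge0}\frac{(-z)^k}{k!}\int_{A^k}\det\big(K(x_i,x_j)\big)_{i,j\le k}\,d\mu^{k}=\det(I-zK_A),
\]
where the right-hand side is the Fredholm determinant $\prod_i(1-z\lambda_i)$. The series converges because Hadamard's inequality, applied after writing the kernel via the expansion, gives $|\int_{A^k}\det|\le k^{k/2}\|K_A\|_1^k$. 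We identify the two sides first for small $|z|$ and then extend by analyticity.

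With $w=1-z$, the left-hand side equals $\sum_m P(\#_A=m)\,w^m$, a power series with nonnegative coefficients. It is therefore strictly positive for $w>0$ wherever it converges. On the other hand, the entire function $\det(I-zK_A)$ vanishes at $z=1/\lambda_i$, that is at $w=1-1/\lambda_i$. This value of $w$ is positive exactly when $\lambda_i<0$ or $\lambda_i>1$. This step is the main obstacle: we must show that the generating function actually converges at that $w$. We would handle it by using the entire Fredholm determinant to bound $P(\#_A=m)$ via Cauchy estimates, so that $\sum_m P(\#_A=m)w^m$ has infinite radius of convergence. Granting this, every $\lambda_i$ lies in $[0,1]$, so $0\le K_A\le 1$ for every compact $A$. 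Exhausting $E$ by compact sets then gives $0\le K\le 1$.

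\emph{Sufficiency.} Let $0\le K\le1$ and fix a compact $A$, so that $\lambda_i\in[0,1]$. We use the Hough--Krishnapur--Peres--Vir\'ag mixture construction. Take independent Bernoulli variables $I_i$ with $P(I_i=1)=\lambda_i$. Since $\sum\lambda_i=\operatorname{tr}K_A<\infty$, only finitely many $I_i$ equal $1$ almost surely. Conditionally on $(I_i)$, take the finite projection ensemble with kernel $\sum_i I_i\varphi_i(x)\overline{\varphi_i(y)}$, realized as in the Slater-determinant construction \eqref{eq1.7}. Its correlation functions are the determinants of this random kernel. Averaging over $(I_i)$ and using multilinearity of the determinant in the rank-one pieces (the Cauchy--Binet expansion) yields $\rho_k=\det(K_A(x_i,x_j))$.

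Next comes consistency. The restriction to $A$ of the process built on $A'\supset A$ has correlation functions $\det(K(x_i,x_j))$ on $A^k$, and these are exactly the correlation functions of the process built from $K_A$. Uniqueness on $A$ (proved below) then identifies the two laws. We take an exhaustion of $E$ by compact sets and apply Kolmogorov's extension theorem on $(\Lambda,\mathcal F)$, which produces the process on $E$.

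\emph{Uniqueness.} For $K\ge0$, Hadamard's inequality gives $\rho_k\le\prod_iK(x_i,x_i)$. Hence the joint factorial moments of $(\#_{A_1},\dots,\#_{A_m})$ for relatively compact $A_j$ are bounded by $\prod_j(\operatorname{tr}K_{A_j})^{k_j}$. The joint moment generating function is therefore entire, so the moment problem is determinate. The correlation functions thus fix all finite-dimensional distributions on cylinder sets, and hence fix $P$ on $\mathcal F$.
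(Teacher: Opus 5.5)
The paper gives no proof of this theorem. It quotes it from Soshnikov, and its only hint is that the key ingredient is the Fredholm identity $\operatorname{Tr}(\wedge^k K)=\frac1{k!}\int\det(K(x_i,x_j))\,dx_1\ldots dx_k$. Your argument is correct in outline and takes a different, self-contained route.

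Your necessity step rests on that same identity. It is what identifies $\frac1{k!}\int_{A^k}\det(K(x_i,x_j))\,d\mu^k$ with $e_k(\lambda)$, and hence the series with $\prod_i(1-z\lambda_i)$. You combine it with the observation that $\mathbb E[w^{\#_A}]>0$ for $w>0$.

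For sufficiency, the argument behind the citation (Soshnikov, after Macchi) builds the process on a compact $A$ from the Janossy densities $\det(I-K_A)\det(J_A(x_i,x_j))$, where $J_A=K_A(I-K_A)^{-1}$. Their total mass is $1$ precisely by the Fredholm identity, but the construction needs $\|K_A\|<1$ and a limiting argument when $1$ is an eigenvalue. Your Bernoulli mixture of projection ensembles handles eigenvalue $1$ directly and shows at once that $\#_A$ is a sum of independent Bernoulli$(\lambda_i)$ variables. The Janossy route, in exchange, gives explicit densities for the process on $A$. Your uniqueness argument via factorial-moment bounds is the standard one.

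Two smaller points.

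\emph{The ``main obstacle'' in the necessity part is not one.} Factorial moments are nonnegative, so by Tonelli
$\mathbb E[(1+t)^{\#_A}]=\sum_k\frac{t^k}{k!}\int_{A^k}\rho_k\,d\mu^k=\sum_k t^k e_k(\lambda)\le e^{t\|K_A\|_1}$ for all $t\ge0$. Hence the generating function is entire, and $\mathbb E[w^{\#_A}]=\prod_i(1-\lambda_i+\lambda_i w)$ holds for every $w$ directly, with no Cauchy estimates.

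\emph{Compatibility of the diagonal needs more than you give.} It does not follow from $K_A=\chi_AK_{A'}\chi_A$ alone, because kernels are fixed only a.e.\ on $A\times A$ and the diagonal is typically a null set. With a badly chosen diagonal, even $K=0$ would ``determine'' a Poisson process. You need the lemma that the spectral diagonal is canonical. For $K_{A'}\ge0$, write $K_{A'}=B^*B$ with $B$ Hilbert--Schmidt. Then $K(x,x)=\int|B(z,x)|^2\,d\mu(z)$ a.e.\ independently of the factorization, and $K_A=(B\chi_A)^*(B\chi_A)$. For signed $K$, split into positive and negative parts. This is Soshnikov's lemma on the choice of kernel, and it is also what gives the statement its meaning.
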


For any real-valued measurable function $f : E\to \mathbb R$, the associated linear statistic, which is the random variable over $(\Lambda,\mathcal F ,P)$, is defined
by the function
\begin{equation}\label{eq2.2}
S_f(\xi)=\sum_{i=-\infty}^{\infty}f(x_i), \quad \xi= (x_i)_{i=-\infty}^{\infty}\in \Lambda.
\end{equation}
The distribution of the linear statistic $S_f$ is given by the following well-known fact (see, for instance, \cite{Soshnikov02}).

\begin{prop} \label{p2.2}
For any $f \in L^\infty(E, \mathbb R)$, the expectation and the variance of the random variable $S_f$ with respect to the determinantal point process associated with the operator $K$ are given by
\begin{equation}\label{eq2.3}
\mathbb E[S_f] = \int_E K(x, x) f(x) d\mu(x) 
\end{equation}
and
\[
\mathbb V[S_f] =\int_E K(x, x) f^2(x) d\mu(x)-\int_E \int_E |K(x, y)|^2 f(x)f(y)\, d\mu(x)\, d\mu(y).
\]
In particular, if $K$ is a projection, $K^2=K$, then 
\begin{equation}\label{eq2.4}
\mathbb V[S_f] = \frac 12 \int_E \int_E |K(x, y)|^2 (f(x)-f(y))^2\, d\mu(x)\, d\mu(y). 
\end{equation}
\end{prop}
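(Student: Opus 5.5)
The plan is to use the correlation function description of the process, in the form \eqref{eq2.1}, together with the combinatorial expansion of products of counting variables. Write $S_f=\sum_i f(x_i)$. For a simple function $f=\sum_{j} c_j\chi_{A_j}$ with disjoint Borel sets $A_j$ of compact closure, we have $S_f=\sum_j c_j\#_{A_j}$. By the definition of $\rho_1$, $\mathbb E[\#_{A_j}]=\int_{A_j}\rho_1\,d\mu=\int_{A_j}K(x,x)\,d\mu$, which gives \eqref{eq2.3} by linearity. For the second moment, expand $S_f^2=\sum_j c_j^2\#_{A_j}^2+\sum_{j\ne l}c_jc_l\#_{A_j}\#_{A_l}$. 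Then use $\#_A^2=\#_A(\#_A-1)+\#_A$ and the defining identities for $\rho_2$; this yields
\[
\mathbb E[S_f^2]=\int_E f^2(x)\rho_1(x)\,d\mu(x)+\int_E\int_E f(x)f(y)\rho_2(x,y)\,d\mu(x)\,d\mu(y).
\]
Now insert $\rho_2(x,y)=K(x,x)K(y,y)-K(x,y)K(y,x)=K(x,x)K(y,y)-|K(x,y)|^2$, using self-adjointness $K(y,x)=\overline{K(x,y)}$. Subtracting $(\mathbb E S_f)^2$ gives the variance formula for simple $f$.

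To pass to general bounded $f$ with, say, compact support, approximate $f$ uniformly by simple functions supported in a fixed compact set $A$. All the integrals converge because $\int_A K(x,x)\,d\mu=\operatorname{tr}K_A<\infty$ (local trace class). Moreover $\int_A\int_A|K(x,y)|^2\,d\mu\,d\mu=\|K_A\|_{HS}^2\le \operatorname{tr}K_A$, since $0\le K_A\le 1$. On the random side, $|S_f-S_g|\le\|f-g\|_\infty\#_A$, and $\#_A$ has finite second moment by the same formulas. For $f\in L^\infty$ without compact support, the statement should be read under the assumption that the integrals converge. In our application this is the case either because $X$ is compact or because $f$ has compact support, so I will restrict to that situation, or take monotone limits over an exhaustion when the integrals are finite.

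For the projection case, use $K^2=K$ and self-adjointness at the kernel level: $K(x,x)=\int_E K(x,y)K(y,x)\,d\mu(y)=\int_E|K(x,y)|^2\,d\mu(y)$. Substituting this into the first term gives
\[
\int_E\int_E |K(x,y)|^2 f^2(x)\,d\mu(x)\,d\mu(y).
\]
By the symmetry $|K(x,y)|=|K(y,x)|$, this equals half the sum of the same expression with $f^2(x)$ and with $f^2(y)$. Combining with the cross term then gives $\tfrac12\iint|K(x,y)|^2(f(x)-f(y))^2\,d\mu\,d\mu$, which is \eqref{eq2.4}.

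The main technical obstacle is justifying the pointwise kernel identity $K(x,x)=\int|K(x,y)|^2\,d\mu(y)$ and the diagonal values, since the kernel is only defined almost everywhere. In our setting the kernel is smooth, so this is immediate. In general I would use $\operatorname{tr}(\chi_A K\chi_A)=\operatorname{tr}(\chi_AK^2\chi_A)=\|K\chi_A\|_{HS}^2$, which is the integrated form and is all that is needed after integrating against $f^2$ supported in $A$.
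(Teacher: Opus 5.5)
Your proof is correct. It is the standard derivation from the one- and two-point correlation functions, followed by $K(x,x)=\int_E|K(x,y)|^2\,d\mu(y)$ for a projection; the paper gives no proof of its own, citing the proposition as well known (Soshnikov, and Berman/Ioos for the line-bundle version), where essentially this computation appears. Your caveat that the formulas should be read for compactly supported (or otherwise integrable) $f$ is appropriate, since otherwise the variance formula can be $\infty-\infty$. In the paper's bundle setting the same argument goes through verbatim, with $K(x,y)K(y,x)$ interpreted via $L^p_x\otimes (L^p_y)^*\otimes L^p_y\otimes (L^p_x)^*\cong\mathbb C$ as $|K(x,y)|^2_{h^{L^p}}$.
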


In our case, the operator $K$ acts on sections of a Hermitian line bundle $(L,h^L)$ over a Riemannian  manifold $X$. An adaptation of the above results to this setting was made by Berman \cite[\S 5.1]{Berman18} (see also \cite{Ioos,Lemoine22}). First, we define 
\[
\det(K(x_i,x_j)_{1\leq i,j\leq k})=\sum_{\sigma\in \mathfrak S_k}\varepsilon(\sigma)\bigotimes_{i=1}^k K(x_i,x_{\sigma(i)})\in \mathbb C,
\]
using the canonical isomorphism
\[
L_{x_1}\otimes L^*_{x_{\sigma(1)}}\otimes \ldots \otimes L_{x_k}\otimes L^*_{x_{\sigma(k)}} \cong\mathbb C. 
\]
This justify the formula \eqref{eq2.1} and the definition of the determinantal process. 

The key fact in the proof of Theorem~\ref{th2.1} is the following Fredholm formula 
\[
\operatorname{Tr}(\wedge^k(K))=\frac{1}{k!}\int \det(K(x_i,x_j)_{1\leq i,j\leq k})dx_1\ldots dx_k, 
\]
whose validity in the current setting can be easily checked. 

Finally, for an adaptation of Proposition~\ref{p2.2}, see \cite[Lemma 6.2]{Berman18} or \cite[Proposition 4.4]{Ioos}

\section{Asymptotic behavior of the spectral projection}\label{s3}
In this section, we recall the results on the asymptotic behavior of the Schwartz kernels of the spectral projection $P_{p,I}$ obtained in \cite{Kor22}.

Recall that $P_{p,I}$ denotes the spectral projection of $H_{p}$ associated with the interval $I$. Let $P_{p,I}(x,x^\prime)$, $x,x^\prime\in X$, be its smooth Schwartz kernel with respect to the Riemannian volume form $dv_X$. In the case when $H_p=\frac{1}{p}\Delta_p$, where  $\Delta_p$ is the renormalized Bochner Laplacian introduced in \cite{Gu-Uribe} and $I=(\alpha,\beta)$ is a sufficiently small open neighborhood of $0$, the projection $P_{p,I}$ is called the generalized Bergman projection in \cite{ma-ma08}, since it generalizes the Bergman projection on complex manifolds. Its kernel is called the generalized Bergman kernel. 

First, we state the off-diagonal exponential estimate for $P_{p,I}(x,x^\prime)$, which is the analog of \cite[Theorem 1]{ma-ma15} and \cite[Theorem 1.2]{ko-ma-ma} for the (generalized) Bergman kernel. 

\begin{theorem}[\cite{Kor22},Theorem 2]\label{th3.1}
There exists $c>0$ such that for any $k\in \mathbb N$, there exists $C_k>0$ such that for any $p\in \mathbb N$, $x, x^\prime \in X$, we have
\[
\big|P_{p,I}(x, x^\prime)\big|_{{C}^k}\leq C_k p^{n+\frac{k}{2}}
e^{-c\sqrt{p} \,d(x, x^\prime)}.
\]
\end{theorem}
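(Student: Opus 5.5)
The plan is to write the spectral projection as a contour integral of the resolvent and to prove exponentially weighted resolvent estimates of Agmon type. By Theorem~\ref{th1.1}, for $p>p_0$ the spectrum of $H_p$ avoids $(\alpha-\mu_0,\alpha+\mu_0)\cup(\beta-\mu_0,\beta+\mu_0)$, and $H_p$ is bounded below uniformly in $p$. I take a fixed closed contour $\Gamma\subset\mathbb C$ that encircles $\sigma(H_p)\cap I$, crosses the real axis only at $\alpha$ and $\beta$, and stays at distance $\geq \mu_0/2$ from $\sigma(H_p)$ for all $p>p_0$. Then
\[
P_{p,I}=\frac{1}{2\pi i}\int_\Gamma (z-H_p)^{-1}\,dz ,
\]
and it suffices to bound the Schwartz kernel of $(z-H_p)^{-1}$ uniformly in $z\in\Gamma$. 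The finitely many $p\le p_0$ are handled separately: the kernel is smooth, so only the large-distance decay has to be checked there.

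\emph{Step 1: weighted resolvent estimate.} By bounded geometry there is a smooth function $\tilde d(\cdot,x')$ with $|\tilde d-d(\cdot,x')|\le 1$ and derivatives of every order bounded uniformly in $x'$. Set $\phi=a\sqrt p\,\tilde d(\cdot,x')$, with $a>0$ small to be fixed. Conjugating gives
\[
e^{\phi}H_pe^{-\phi}=H_p+\tfrac1p\big(2\langle d\phi,\nabla^{L^p}\rangle+(\Delta\phi)-|d\phi|^2\big)=H_p+R_{p,a}.
\]
Here $|d\phi|\le Ca\sqrt p$ and $|\Delta\phi|\le Ca\sqrt p$. Hence $\tfrac1p|d\phi|^2=O(a^2)$ and $\tfrac1p\Delta\phi=O(a/\sqrt p)$. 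For the first-order term, $\|p^{-1/2}\nabla^{L^p}u\|^2=\langle (H_p-V)u,u\rangle$, so $\tfrac1p\,d\phi\cdot\nabla^{L^p}$ is bounded by $Ca\,\|(H_p+C)^{1/2}u\|$. Since $\|(H_p+C)^{1/2}(z-H_p)^{-1}\|$ is uniformly bounded on $\Gamma$, we get $\|R_{p,a}(z-H_p)^{-1}\|\le C'a<1/2$ for $a$ small. A Neumann series then gives $\|e^{\phi}(z-H_p)^{-1}e^{-\phi}\|\le C$ uniformly in $z\in\Gamma$, $p$ and $x'$.

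\emph{Step 2: upgrade to Sobolev and then pointwise bounds.} I introduce the $p$-scaled Sobolev norms $\|u\|_{m,p}=\sum_{\ell\le m}\|(p^{-1/2}\nabla^{L^p})^\ell u\|$. Elliptic regularity for $H_p$, which is uniformly elliptic in these norms by bounded geometry, upgrades Step 1 to
\[
\|e^{\phi}(z-H_p)^{-1}e^{-\phi}\|_{m,p\to m+2,p}\le C_m .
\]
The same holds for the adjoint and for the composition of two resolvents, via $(z-H_p)^{-1}=(z-H_p)^{-1}(z_0-H_p)^{-1}\cdot(\ldots)$ or by iterating. To get kernel estimates I localize in normal coordinates on balls of radius $p^{-1/2}$ around $x$ and $x'$ and rescale $Z\mapsto \sqrt p\,Z$. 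In the rescaled coordinates the operators have uniformly bounded coefficients, and the Sobolev embedding gives pointwise control. Undoing the scaling produces the factor $p^{n}$, from the volume $p^{-n}$ of each ball under the $L^2\to L^\infty$ embedding in $2n$ real dimensions applied in both variables. Each derivative costs $p^{1/2}$, which yields $p^{n+k/2}$. The weight $e^{\phi}$ changes only by a bounded factor across a ball of radius $p^{-1/2}$, so it transfers to $e^{-a\sqrt p\,d(x,x')}$. Integrating over $\Gamma$ finishes the proof with $c=a$.

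The main obstacle I expect is Step 2: making the passage from $L^2$ weighted bounds to $C^k$ kernel bounds rigorous, with exactly the powers $p^{n+k/2}$ and uniformity in $x,x'$. This requires a careful treatment of the trivialization of $L^p$ near each point, so that the connection form is $O(\sqrt p\,|Z|)$ on $p^{-1/2}$-balls. It also requires showing that the commutators of $p^{-1/2}\nabla^{L^p}$ with $H_p$ and with $e^{\phi}$ stay bounded in the scaled norms. I would handle both by the same scaling/commutator argument, using Step 1 as the base case and inducting on $m$.
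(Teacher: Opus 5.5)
Your overall scheme is the standard one for this estimate: a contour integral of the resolvent, Agmon-type conjugation by $e^{\phi}$ with $\phi\approx a\sqrt p\,d(\cdot,x')$, uniform bounds in $p$-scaled Sobolev norms, and rescaled Sobolev embedding on $p^{-1/2}$-balls. The paper does not prove the statement; it quotes it from \cite{Kor22}, which follows \cite{ko-ma-ma}. As written, however, your choice of weight breaks the last step.

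With $\tilde d$ regularized at unit scale, $|\tilde d-d(\cdot,x')|\le 1$, Steps 1--2 actually give
\[
|P_{p,I}(x,x')|\le Cp^{n}e^{-(\phi(x)-\phi(x'))},\qquad \phi(x)-\phi(x')=a\sqrt p\,\big(\tilde d(x,x')-\tilde d(x',x')\big)\ge a\sqrt p\,\big(d(x,x')-2\big).
\]
This is a loss of a factor $e^{2a\sqrt p}$. The resulting bound is worse than the trivial one for $d(x,x')<2$. It says nothing in the range $p^{-1/2}\ll d(x,x')\lesssim 1$, which is exactly where the estimate has content. Your remark that $e^{\phi}$ changes by a bounded factor across a $p^{-1/2}$-ball controls the oscillation of $\phi$. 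It does not control the discrepancy $\phi-a\sqrt p\,d(\cdot,x')$, which is of order $a\sqrt p$.

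Making the approximation error small but fixed does not help. If $|\nabla^2\tilde d|\le C$, then along a direction $Z$ with $\langle\nabla\tilde d(x'),Z\rangle\le 0$ and $|Z|=1/C$, the deficit $d-(\tilde d(\cdot)-\tilde d(x'))$ is at least $1/(2C)$. So there is always a loss $e^{a\sqrt p/(2C)}$.

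The fix is to regularize the distance at scale $p^{-1/2}$. For $p\ge 1$, $(X,pg)$ has bounded geometry uniformly in $p$: it has the same Levi-Civita connection, curvature scaled down, and injectivity radius $\ge r_X$. So there is a function $\tilde d_p(\cdot,x')$ with $|\tilde d_p-\sqrt p\,d(\cdot,x')|\le 1$ and $|\nabla^j\tilde d_p|_g\le C_jp^{j/2}$ for $j\ge1$. Take $\phi=a\tilde d_p(\cdot,x')$. Then $p^{-1/2}|d\phi|$ and $p^{-1}|\Delta\phi|$ are $O(a)$, and so are all $p^{-1/2}$-scaled derivatives of the conjugation terms. (The term $p^{-1}\Delta\phi$ is now $O(a)$ rather than $O(a/\sqrt p)$, which is still harmless.) Step 1 and the commutator induction in Step 2 then go through unchanged, and the final transfer to $e^{-a\sqrt p\,d(x,x')}$ costs only $e^{2a}$.

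Two smaller points:
\begin{itemize}
\item The conjugation and Neumann-series argument with the unbounded weight $e^{\phi}$ should be justified via bounded truncations, e.g.\ $\phi/(1+\epsilon\phi)$, whose scaled derivatives are bounded uniformly in $\epsilon$.
\item Your treatment of $p\le p_0$ does not work in general. For such $p$ the endpoints $\alpha,\beta$ may lie in $\sigma(H_p)$, even in its continuous part, and then the kernel of $P_{p,I}$ need not decay exponentially at all. The estimate should be read for $p>p_0$, which is the only range in which the paper uses $P_{p,I}$.
\end{itemize}
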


Here $d(x,x^\prime)$ is the geodesic distance and $|P_{p,I}(x, x^\prime)|_{{C}^k}$ denotes the pointwise ${C}^k$-seminorm of the section $P_{p,I}$ at a point $(x, x^\prime)\in X\times X$, which is the sum of the norms induced by $h^L$ and $g$ of the derivatives up to order $k$ of $P_{p,I}$ with respect to the connection $\nabla^{L^p}$ and the Levi-Civita connection $\nabla^{TX}$ evaluated at $(x, x^\prime)$.

Then we describe an asymptotic expansion of  $P_{p,I}$ as $p\to \infty$ in a fixed neighborhood of the diagonal (independent of $p$). Such kind of expansion is called full off-diagonal expansion following Ma-Marinescu's
book \cite[Chapter 4]{ma-ma:book}. 

First, we introduce normal coordinates near an arbitrary point $x_0\in X$. 
We denote by $B^{X}(x_0,r)$ and $B^{T_{x_0}X}(0,r)$ the open balls in $X$ and $T_{x_0}X$ with center $x_0$ and radius $r$, respectively. We identify $B^{T_{x_0}X}(0,r_X)$ with $B^{X}(x_0,r_X)$ via the exponential map $\exp^X_{x_0}: T_{x_0}X \to X$. Furthermore, we choose trivializations of the bundle $L$ over $B^{X}(x_0,r_X)$, identifying its fibers $L_Z$ at $Z\in B^{T_{x_0}X}(0,r_X)\cong B^{X}(x_0,r_X)$ with the space $L_{x_0}$ by parallel transport with respect to the connections $\nabla^L$ along the curve $\gamma_Z : [0,1]\ni u \to \exp^X_{x_0}(uZ)$. Denote by $\nabla^{L^p}$ and $h^{L^p}$ the connection and the Hermitian metric on the trivial bundle over $B^{T_{x_0}X}(0,r_X)$ with fiber $(L^p)_{x_0}$ induced by these trivializations.  

Let $dv_{TX}$ denote the Riemannian volume form of the Euclidean space $(T_{x_0}X, g_{x_0})$. We define a smooth function $\kappa_{x_0}$ on $B^{T_{x_0}X}(0,r_X)\cong B^{X}(x_0,r_X)$ by the equation
\[
dv_{X}(Z)=\kappa_{x_0}(Z)dv_{TX}(Z), \quad Z\in B^{T_{x_0}X}(0,r_X). 
\] 
The kernel $P_{p,I}(x,x^\prime)$ induces a smooth function $P_{p,I,x_0}(Z,Z^\prime)$ on the fiberwise product $$TX\times_X TX=\{(Z,Z^\prime)\in T_{x_0}X\times T_{x_0}X : x_0\in X\}$$ defined for all $x_0\in X$ and $Z,Z^\prime\in T_{x_0}X$ with $|Z|, |Z^\prime|<r_X$:
\[
P_{p,I,x_0}(Z,Z^\prime)=P_{p,I}(\exp^X_{x_0}(Z),\exp^X_{x_0}(Z^\prime)). 
\]

Denote by $\mathcal P_{I,x_0}$ the spectral projection of the model operator $\mathcal H^{(x_0)}:=\mathcal H^{(x_0)}_1$ associated with $I$. It can be written as
\begin{equation}\label{eq3.1}
\mathcal P_{I,x_0}=\sum_{\mathbf k \in \mathcal K_I} \mathcal P_{\Lambda_{\mathbf k},x_0},
\end{equation}
where $\mathcal P_{\Lambda_{\mathbf k},x_0}$ is the orthogonal projection to the eigenspace of the model operator $\mathcal H^{(x_0)}$ with the eigenvalue $\Lambda_{\mathbf k}$. 

One can give an explicit formula for the smooth Schwartz kernel of $\mathcal P_{\Lambda_{\mathbf k},x_0}$.
Choose an orthonormal base $\{e_j : j=1,\ldots,2n\}$ in $T_{x_0}X$ such that  
\[
B_{x_0}e_{2k-1}=a_k(x_0)e_{2k}, \quad B_{x_0}e_{2k}=-a_k(x_0)e_{2k-1},\quad k=1,\ldots,n. 
\]
It gives rise to the linear isomorphism $$T_{x_0}X\cong \mathbb R^{2n},\quad Z\in T_{x_0}X \mapsto (Z_1,\ldots,Z_{2n})\in \mathbb R^{2n}.$$ Set $z_j=Z_{2j-1}+iZ_{2j}, \bar z_j=Z_{2j-1}-iZ_{2j}, j=1,\ldots,n$. Then we have 
\begin{multline}\label{eq3.2}
\mathcal P_{\Lambda_{\mathbf k}}(Z,Z^\prime)=\frac{1}{(2\pi)^n}\prod_{j=1}^na_j L_{k_j}\left(\frac{a_j|z_j-z^\prime_j|^2}{2}\right)\\ \times \exp\left(-\frac 14\sum_{k=1}^na_k(|z_k|^2+|z_k^\prime|^2- 2z_k\bar z_k^\prime) \right).
\end{multline}
Here $L_k=L^{(0)}_k$, $k\in\ZZ_+$, is the Laguerre polynomial: 
\[
L_{k}(x)=\frac{e^x}{k!}\frac{d^k}{dx^k}(e^{-x}x^{k})=\sum_{j=0}^{k}\binom{k}{k-j}\frac{(-x)^j}{j!}, \quad x\geq 0.
\]

For the lowest eigenvalue $\Lambda_0(x_0)$, the kernel of the projection $P_{\Lambda_0,x_0}$ is the Bergman kernel $\mathcal P_{x_0}\in C^\infty(T_{x_0}X\times T_{x_0}X)$ given by (see \cite[(4.1.84)]{ma-ma:book})
\begin{equation}
\label{eq3.3}
\mathcal P_{x_0}(Z,Z^\prime)=\frac{1}{(2\pi)^n}\prod_{j=1}^na_j \exp\left(-\frac 14\sum_{k=1}^na_k(|z_k|^2+|z_k^\prime|^2- 2z_k\bar z_k^\prime) \right).
\end{equation}

The following theorem is the analog of \cite[Theorem 4.18']{dai-liu-ma}, \cite[Theorem 4.2.1]{ma-ma:book} and \cite[ Theorem 4.3]{ko-ma-ma} for the (generalized) Bergman kernel.

\begin{theorem}[\cite{Kor22},Theorem 3]\label{th3.2}
There exists $\varepsilon\in (0,r_X)$ such that for any $x_0\in X$ and $Z,Z^\prime\in T_{x_0}X$, $|Z|, |Z^\prime|<\varepsilon$, the sequence $P_{p,I,x_0}(Z,Z^\prime)$ admits an asymptotic expansion as $p\to\infty$
\begin{equation}\label{eq3.4}
\frac{1}{p^n}P_{p,I,x_0}(Z,Z^\prime)\cong
\sum_{r=0}^\infty F_{r,x_0}(\sqrt{p} Z, \sqrt{p}Z^\prime)\kappa_{x_0}^{-\frac 12}(Z)\kappa_{x_0}^{-\frac 12}(Z^\prime)p^{-\frac{r}{2}}, 
\end{equation}
where the leading coefficient $F_{0,x_0}\in C^\infty(T_{x_0}X\times T_{x_0}X)$ is given by
\[
F_{0,x_0}(Z,Z^\prime)=\mathcal P_{I, x_0}(Z,Z^\prime),
\]
and for any $r\geq 0$, the coefficient $F_{r,x_0}\in C^\infty(T_{x_0}X\times T_{x_0}X)$ has the form
\[
F_{r,x_0}(Z,Z^\prime)=J_{r,x_0}(Z,Z^\prime)\mathcal P_{x_0}(Z,Z^\prime),
\]
where $\mathcal  P_{x_0}$ is the Bergman kernel given by \eqref{eq3.3} and $J_{r,x_0}$ is a polynomial in $Z, Z^\prime$, depending smoothly on $x_0$, with the same parity as $r$ and $\operatorname{deg} J_{r,x_0}\leq \kappa(I)+3r$, where $\kappa(I)=\max \{|\mathbf k| : \Lambda_{\mathbf k,\mu}\in I\}$. 

For any $j\in \mathbb N$, the remainder 
\[
R_{j,p,x_0}(Z,Z^\prime):=\frac{1}{p^n}P_{p,I,x_0}(Z,Z^\prime)
-\sum_{r=0}^jF_{r,x_0}(\sqrt{p} Z, \sqrt{p}Z^\prime)\kappa_{x_0}^{-\frac 12}(Z)\kappa_{x_0}^{-\frac 12}(Z^\prime)p^{-\frac{r}{2}}
\]
in the asymptotic expansion \eqref{eq3.4} satisfies the following condition. For any $m,m^\prime\in \mathbb N$, there exist positive constants $C$, $c$, $c_0$ and $M$ such that for any $p\geq 1$, $x_0\in X$ and $Z,Z^\prime\in T_{x_0}X$, $|Z|, |Z^\prime|<\varepsilon$, 
\begin{multline}\label{eq3.5}
\sup_{|\alpha|+|\alpha^\prime|\leq m}\Bigg|\frac{\partial^{|\alpha|+|\alpha^\prime|}}{\partial Z^\alpha\partial Z^{\prime\alpha^\prime}}R_{j,p,x_0}(Z,Z^\prime)\Bigg|_{C^{m^\prime}(X)}\\ 
\leq Cp^{-\frac{j-m+1}{2}}(1+\sqrt{p}|Z|+\sqrt{p}|Z^\prime|)^M\exp(-c\sqrt{p}|Z-Z^\prime|)+ O(e^{-c_0\sqrt{p}}).
\end{multline}
\end{theorem}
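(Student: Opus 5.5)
The plan follows the Dai--Liu--Ma / Ma--Marinescu scheme for generalized Bergman kernels, adapted to a spectral interval with endpoints in gaps. First, I would \emph{localize}. Since $\alpha,\beta\notin\Sigma$, Theorem~\ref{th1.1} gives a contour $\Gamma\subset\mathbb C$ surrounding $I\cap\sigma(H_p)$ at distance $\geq\mu_0$ from $\sigma(H_p)$, uniformly for $p>p_0$. Then
\[
P_{p,I}=\frac{1}{2\pi i}\int_\Gamma (\lambda-H_p)^{-1}\,d\lambda .
\]
I would replace the indicator of $I$ by a smooth function $\varphi$ equal to $1$ on $I\cap\sigma(H_p)$ and $0$ on the rest of the spectrum, so that $P_{p,I}=\varphi(H_p)$. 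Finite propagation speed for the wave equation of $\frac1p\Delta^{L^p}$, together with the off-diagonal decay of Theorem~\ref{th3.1}, shows that $P_{p,I}(x,x')$ near the diagonal is determined, modulo $O(e^{-c_0\sqrt p})$ in every $C^m$ norm, by the restriction of $H_p$ to $B^X(x_0,\varepsilon)$. This is the source of the term $O(e^{-c_0\sqrt p})$ in \eqref{eq3.5}.

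Second, I would \emph{transplant and rescale}. In the normal coordinates and parallel-transport trivialization above, I would extend the coefficients of $H_p$ from $B^{T_{x_0}X}(0,\varepsilon)$ to all of $T_{x_0}X$ by gluing with the model operator outside a ball, keeping bounded geometry. I would conjugate by $\kappa_{x_0}^{1/2}$, so that the operator is self-adjoint for $dv_{TX}$, and rescale $Z\mapsto Z/\sqrt p$ with $t=1/\sqrt p$. Taylor expansion of the connection form (whose leading term is $\alpha^{(x_0)}$ in the radial gauge), the metric and $V$ gives
\[
\mathcal L_t=\mathcal L_0+\sum_{r\geq1}t^r\mathcal O_r + O(t^{N}),
\]
where $\mathcal L_0=\mathcal H^{(x_0)}$ and each $\mathcal O_r$ is a second order operator with polynomial coefficients of degree $\leq r+2$ and parity $r$. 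The rescaled kernel $p^{-n}P_{p,I,x_0}(Z/\sqrt p,Z'/\sqrt p)\,\kappa^{1/2}\kappa'^{1/2}$ is then the kernel of $\frac{1}{2\pi i}\int_\Gamma(\lambda-\mathcal L_t)^{-1}d\lambda$.

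Third, I would \emph{expand the resolvent}. I would prove uniform weighted Sobolev estimates for $(\lambda-\mathcal L_t)^{-1}$, $\lambda\in\Gamma$, on spaces with norms $\|\langle Z\rangle^{k}\partial^\alpha u\|_{L^2}$. Commutators of $\mathcal L_t$ with $\langle Z\rangle$ and $\partial_Z$ are controlled by $\mathcal L_t$ itself, and $\Gamma$ stays in the gap uniformly in $t$. These estimates yield smoothness in $t$ of the kernel, with Taylor coefficients
\[
F_r=\frac{1}{2\pi i}\int_\Gamma\sum_{\sum r_i=r}(\lambda-\mathcal L_0)^{-1}\mathcal O_{r_1}(\lambda-\mathcal L_0)^{-1}\cdots\mathcal O_{r_k}(\lambda-\mathcal L_0)^{-1}\,d\lambda .
\]
The remainder satisfies the polynomially weighted bounds in \eqref{eq3.5}. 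Sobolev embedding gives the $C^m$ bounds, and differentiating in $x_0$ gives the $C^{m'}(X)$ bounds. To get the exponential factor $e^{-c\sqrt p|Z-Z'|}$, I would conjugate by $e^{\delta\langle Z-Z'\rangle}$-type weights, which perturbs $\mathcal L_t$ by an operator small relative to the gap.

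Finally, I would \emph{identify the coefficients}. $F_0$ is the spectral projection of $\mathcal H^{(x_0)}$ on $I$, namely $\mathcal P_{I,x_0}$. For $r\geq1$, I would use the Landau-level structure: the creation and annihilation operators $b_j,b_j^+$ diagonalize $\mathcal L_0$, and each eigenprojection maps polynomials times $\mathcal P_{x_0}$ to polynomials times $\mathcal P_{x_0}$, with resolvents acting by scalars on each level. Since $\mathcal O_r$ raises polynomial degree by at most $r+2$ and changes level by a bounded amount, $F_r=J_r\mathcal P_{x_0}$ with $J_r$ polynomial of parity $r$ and $\deg J_r\leq\kappa(I)+3r$.

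I expect the main obstacle to be the uniform weighted resolvent estimates along $\Gamma$ for the glued operator $\mathcal L_t$. Only the endpoints of $I$, not whole bands, are guaranteed to lie in gaps, and the localized extended operator must inherit the $p^{-1/4}$ spectral localization of Theorem~\ref{th1.1}. I would try first to prove that $\Gamma$ remains in the resolvent set of $\mathcal L_t$ by applying the argument of Theorem~\ref{th1.1} to the glued operator on $T_{x_0}X$, which has bounded geometry and the same local Landau levels.
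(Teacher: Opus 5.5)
\paragraph{Overall route.} Your architecture is the one by which \cite{Kor22} proves this result; the paper itself only quotes it. The steps are: localize, glue with the model operator on $T_{x_0}X$, rescale by $t=p^{-1/2}$, prove uniform weighted Sobolev bounds for $(\lambda-\mathcal L_t)^{-1}$ on $\Gamma$, expand in $t$, and identify the coefficients through the Landau-level structure. This is the scheme of \cite{dai-liu-ma,ma-ma:book}, in the bounded-geometry form of \cite{ko-ma-ma}. Your way around the obstacle is also the right one, with one correction: the glued operator does not have the same local Landau levels. In the radial (parallel-transport) gauge, the connection form differs from the model one by $O(|Z|^2)$, so the cutoff contributes only $O(\varepsilon)$ to the curvature. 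Hence the glued metric, magnetic field and potential are $O(\varepsilon)$-close to $g_{x_0}$, $\mathbf B_{x_0}$, $V(x_0)$, and its local Landau levels in any bounded energy window lie within $O(\varepsilon)$ of $\Sigma_{x_0}$. Since $\alpha,\beta$ are at positive distance from $\Sigma$, Theorem~\ref{th1.1} applied to the glued operators gives the gap along $\Gamma$ for $\varepsilon$ small and $p$ large. Its constants are uniform in $x_0$ by bounded geometry.

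\paragraph{The gap: localization does not give $O(e^{-c_0\sqrt p})$.} As described, your localization step does not produce the $O(e^{-c_0\sqrt p})$ term in \eqref{eq3.5}. Finite propagation speed (speed $p^{-1/2}$) localizes $\varphi(H_p)$ only up to the Fourier tail, beyond $|s|\sim\delta\sqrt p$, of $\varphi$ viewed as an even function of $\sqrt{H_p-c_1}$. Take a $p$-independent $\varphi$ that is $\equiv1$ near $[\alpha,\beta]$ and $\equiv0$ off $(\alpha-\mu_0,\beta+\mu_0)$. It is not analytic, so its Fourier transform does not decay exponentially. You get $O(p^{-\infty})$ but not $O(e^{-c_0\sqrt p})$. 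This is exactly the compact-case remainder of \cite{ma-ma:book}, which \cite{ko-ma-ma} improved to exponential decay.

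Theorem~\ref{th3.1} does not close this. It bounds the kernel of $P_{p,I}$ alone and gives no comparison between $P_{p,I}$ and the projection of the glued operator $\tilde H_p$. What you need are the Agmon-type weighted resolvent bounds behind Theorem~\ref{th3.1}, uniform in $p$ and $x_0$:
\[
\bigl\|e^{\delta\sqrt p\,\tilde d}(\lambda-H_p)^{-1}e^{-\delta\sqrt p\,\tilde d}\bigr\|\leq C,\qquad \lambda\in\Gamma .
\]
Here $\tilde d$ is a smoothed $d(x_0,\cdot)$, and you need the same bounds for $\tilde H_p$. Combine them with the identity
\[
\phi(\lambda-\tilde H_p)^{-1}\phi-\phi(\lambda-H_p)^{-1}\phi=-\phi(\lambda-H_p)^{-1}[H_p,\chi](\lambda-\tilde H_p)^{-1}\phi .
\]
Here $\phi,\chi$ are cutoffs near $x_0$, with $\chi=1$ near $\operatorname{supp}\phi$ and $H_p=\tilde H_p$ near $\operatorname{supp}\chi$. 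On the support of $[H_p,\chi]$, the function $d(x_0,\cdot)$ exceeds its values on $\operatorname{supp}\phi$ by $\gtrsim\varepsilon$, so the right-hand side is $O(e^{-c\sqrt p})$. Integrating over $\Gamma$ and using the Sobolev versions of these bounds gives the localization in every $C^m$ norm. This is the same weight conjugation you already plan to use for the factor $e^{-c\sqrt p|Z-Z'|}$; you just need it at the start.

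Alternatively, keep finite propagation speed but let the cutoff depend on $p$. Mollify $\mu\mapsto 1_I(\mu^2+c_1)$, with $c_1<\inf V$, by a Gaussian of width $p^{-1/4}$ and apply it to $\sqrt{H_p-c_1}$. Then both the error on the spectrum and the Fourier tail are $O(e^{-c\sqrt p})$.

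\paragraph{The degree bound.} Your degree claim reproduces what must be a misprint in the statement. Take $\kappa(I)=\max\{|\mathbf k|:\mathbf k\in\mathcal K_I\}$. Then \eqref{eq3.1}--\eqref{eq3.3} give
\[
J_{0,x_0}(Z,Z^\prime)=\sum_{\mathbf k\in\mathcal K_I}\prod_{j=1}^n L_{k_j}\Bigl(\frac{a_j|z_j-z^\prime_j|^2}{2}\Bigr).
\]
This has degree exactly $2\kappa(I)$, since the top-degree terms have the same sign and cannot cancel. So $\deg J_{r,x_0}\le\kappa(I)+3r$ already fails at $r=0$ whenever $\kappa(I)\ge1$.

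Your count, made precise, gives $2\kappa(I)+3r$. Write each $\mathcal P_{\Lambda_{\mathbf k},x_0}$ as a multiple of $(\text{raising})^{\mathbf k}\,\mathcal P_{x_0}\,(\text{lowering})^{\mathbf k}$, and normal-order the ladder operators between consecutive copies of $\mathcal P_{x_0}$. Each $\mathcal O_{r_i}$ has degree at most $r_i+2$ in the ladder operators, and there are at most $r$ such factors. That is the bound you should claim.
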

Here $C^{m^\prime}(X)$ is the $C^{m^\prime}$-norm for the parameter $x_0\in X$. 

\section{Proofs of the main results}\label{s4}
In this section, we prove the main results of the paper. 

\begin{proof}[Proof of Theorem~\ref{th1.2}] 
The extension of the formula \eqref{eq2.3} in the current setting reads as follows:  
\begin{equation}\label{eq4.1}
\mathbb E[\mathcal N_p[f]] = \int_X P_{p,I}(x, x) f(x) dv_X(x).
\end{equation}
Since $P_{p,I}(x, x)$ is uniformly bounded and $f\in L^1(X,dv_X)$, the expectation $\mathbb E[\mathcal N_p[f]]$ is finite. 

Setting $Z=Z^\prime=0$ in \eqref{eq3.4} and using \eqref{eq3.1} and \eqref{eq3.2}, we get
\begin{multline*}
\frac{1}{p^n}P_{p,I}(x,x)=\frac{1}{p^n}P_{p,I,x}(0,0)=\mathcal P_{I,x}(0,0)+\mathcal O(p^{-1})\\ =\frac{1}{(2\pi)^n}|\mathcal K_I|\prod_{j=1}^na_j(x)+\mathcal O(p^{-1}), \quad p\to \infty,
\end{multline*}
uniformly on $x\in X$. 
Plugging this formula in \eqref{eq4.1} and using the fact that $\prod_{j=1}^na_j(x)dv_X(x)= \Omega_{\mathbf B}(x)$, we prove \eqref{eq1.5}.

Now assume that $f \in C^1_c(X, \mathbb R)$. The extension of the formula \eqref{eq2.4} in the current setting reads as follows: 
\begin{equation}\label{eq4.2}
\mathbb V[\mathcal N_p[f]] = \frac 12 \int_X \int_X |P_{p,I}(x, y)|^2_{h^{L^p}} (f(x)-f(y))^2\, dv_X(x)\, dv_X(y) . 
\end{equation}
Fix $x\in X$. By Theorem~\ref{th3.1}, it is easy to see that, for any $\epsilon>0$,  
\begin{multline*}
\int_{X\setminus B^{X}(x,p^{-\frac 12+\epsilon})} |P_{p,I}(x, y)|^2_{h^{L^p}} (f(x)-f(y))^2\, dv_X(y)
\\
\leq C_0 p^{2n} \int_{X\setminus B^{X}(x,p^{-\frac 12+\epsilon})}e^{-2c\sqrt{p} \,d(x,y)}\, dv_X(y)
=\mathcal O(p^{-\infty}).
\end{multline*}
All the estimates here and below are uniform on $x\in X$. Thus, we have 
\begin{multline*}
\frac 12 \int_X |P_{p,I}(x, y)|^2_{h^{L^p}} (f(x)-f(y))^2\, dv_X(y)\\
= \frac 12 \int_{B^{X}(x,p^{-\frac 12+\epsilon})} |P_{p,I}(x, y)|^2_{h^{L^p}} (f(x)-f(y))^2\, dv_X(y)+\mathcal O(p^{-\infty}).
\end{multline*}
Make a change of variables $y=\exp^X_x(Z)$ in the integral and denote $f_x(Z)=f(\exp^X_x(Z))$. We get 
\begin{multline*}
\frac 12 \int_{B^{X}(x,p^{-\frac 12+\epsilon})} |P_{p,I}(x, y)|^2_{h^{L^p}} (f(x)-f(y))^2\, dv_X(y)\\
=  \frac 12 \int_{B^{T_xX}(0,p^{-\frac 12+\epsilon})} |P_{p,I,x}(0, Z)|^2 (f_x(0)-f_x(Z))^2\kappa_x(Z)\, dv_{TX}(Z).
\end{multline*}
By Theorem~\ref{th3.2}, for any $x\in X$ and $Z\in T_{x}X$, $|Z|<\varepsilon$, 
\[
\frac{1}{p^n}P_{p,I,x}(0,Z)= \mathcal P_{I, x}(0, \sqrt{p}Z)\kappa_{x}^{-\frac 12}(Z)+R_{0,p,x}(0,Z).
\]
Using \eqref{eq3.5}, it is easy to check that 
\begin{multline*}
\frac 12 \int_{B^{T_xX}(0,p^{-\frac 12+\epsilon})} |P_{p,I,x}(0, Z)|^2 (f_x(0)-f_x(Z))^2\kappa_x(Z)\, dv_{TX}(Z) \\
=\frac 12p^{2n} \int_{B^{T_xX}(0,p^{-\frac 12+\epsilon})} |\mathcal P_{I, x}(0, \sqrt{p}Z)|^2 (f_x(0)-f_x(Z))^2\, dv_{TX}(Z)+\mathcal O(p^{n-\frac{3}{2}}). 
\end{multline*}
Now we make the change of variables $\sqrt{p}Z=Z_1$, omitting the subscript $1$ for convenience: 
\begin{multline*}
\frac 12p^{2n}\int_{B^{T_xX}(0,p^{-\frac 12+\epsilon})} |\mathcal P_{I, x}(0, \sqrt{p}Z)|^2 (f_x(0)-f_x(Z))^2\, dv_{TX}(Z)+\mathcal O(p^{n-\frac{3}{2}})\\
=\frac 12p^{n} \int_{B^{T_xX}(0,p^{\epsilon})} |\mathcal P_{I, x}(0, Z)|^2  (f_x(0)-f_x(Z/\sqrt{p}))^2\, dv_{TX}(Z)+\mathcal O(p^{n-\frac{3}{2}}).
\end{multline*}
We can clearly replace $B^{T_xX}(0,cp^{\epsilon})$ with $T_xX$ in the last integral. Combining the above identities, we get
\begin{multline*}
\frac 12 \int_X|P_{p,I}(x, y)|^2_{h^{L^p}} (f(x)-f(y))^2\, dv_X(y)\\ =\frac 12p^{n}\int_{T_xX} |\mathcal P_{I, x}(0, Z)|^2  (f_x(0)-f_x(Z/\sqrt{p}))^2\, dv_{TX}(Z)+\mathcal O(p^{n-\frac{3}{2}}).
\end{multline*}
Now we use the Taylor formula, taking into account that $d(\exp^X_x)(0): T_xX\to T_xX$ is the identity map:
$$
f_x(0)-f_x(Z)=-[df_x(0)](Z)+\mathcal O\left(|Z|^2\right)=-[df(x)](Z)+\mathcal O\left(|Z|^2\right), \quad Z\in T_xX.
$$
We get
\begin{multline*}
\frac 12 \int_X|P_{p,I}(x, y)|^2_{h^{L^p}} (f(x)-f(y))^2\, dv_X(y)\\ =\frac 12p^{n-1}\left(\int_{T_xX} |\mathcal P_{I, x}(0, Z)|^2  \left([df(x)](Z)\right)^2\, dv_{TX}(Z)+\mathcal O(p^{-\frac{1}{2}})\right).
\end{multline*}

Compute the integral in the right-hand side of the last formula:
\[
\mathcal J:=\int_{T_xX} |\mathcal P_{I, x}(0, Z)|^2  \left([df(x)](Z)\right)^2\, dv_{TX}(Z). 
\]
Let us use the linear isomorphism $T_{x}X\cong \mathbb R^{2n}$ determined by the orthonormal base $\{e_j : j=1,\ldots,2n\}$ in $T_{x}X$ (see Section~\ref{s3}). Using \eqref{eq3.1}, \eqref{eq3.2}, and the fact that the integral of an odd function vanishes, we proceed as follows:
\begin{align*}
\mathcal J=&\int_{\RR^{2n}} |\mathcal P_{I,x}(0, Z)|^2\left(\sum_{j=1}^{2n}Z_j\nabla_{e_{j}}f(x)\right)^2\, dZ\\
=& \left(\frac{1}{(2\pi)^n}\prod_{j=1}^na_j\right)^2 \sum_{\mathbf k^\prime,\mathbf k^{\prime\prime}\in \mathcal K_I}\int_{\RR^{2n}}\prod_{j=1}^nL_{k^\prime_{j}}\left(\frac{a_{j}|z_{j}|^2}{2}\right) L_{k^{\prime\prime}_{j}}\left(\frac{a_{j}|z_{j}|^2}{2}\right)\\ & \times \left(\sum_{\ell=1}^{2n}Z^2_\ell\left(\nabla_{e_{\ell}}f(x)\right)^2\right) \exp\left(-\frac 12\sum_{k=1}^na_k|z_k|^2 \right) dZ. 
\end{align*}
Next, we make the change of variables $Z_{2j-1}=r_j\cos\phi_j, Z_{2j}=r_j\sin\phi_j, j=1,\ldots,n$: 
\begin{align*}
\mathcal J =&\left(\frac{1}{(2\pi)^n}\prod_{j=1}^na_j\right)^2 (2\pi)^{n-1} \sum_{\mathbf k^\prime,\mathbf k^{\prime\prime}\in \mathcal K_I}\sum_{m=1}^{n}  \int_{\RR^{n}_+}\prod_{j=1}^nL_{k^\prime_{j}}\left(\frac{a_{j}r_{j}^2}{2}\right) L_{k^{\prime\prime}_{j}}\left(\frac{a_{j}r_{j}^2}{2}\right)\\
& \times \int_0^{2\pi}\left[\cos^2\phi_m\left(\nabla_{e_{2m-1}}f(x)\right)^2+\sin^2\phi_{m}\left(\nabla_{e_{2m}}f(x)\right)^2\right]d\phi_{m} \\
& \times r_m^2 \exp\left(-\frac 12\sum_{k=1}^na_kr_k^2 \right) \prod_{j=1}^nr_j dr_j\\
=& \left(\frac{1}{(2\pi)^n}\prod_{j=1}^na_j\right)^2 \frac{1}{2}(2\pi)^{n} \sum_{\mathbf k^\prime,\mathbf k^{\prime\prime}\in \mathcal K_I}\sum_{m=1}^{n}  \int_{\RR^{n}_+}\prod_{j=1}^nL_{k^\prime_{j}}\left(\frac{a_{j}r_{j}^2}{2}\right) L_{k^{\prime\prime}_{j}}\left(\frac{a_{j}r_{j}^2}{2}\right)\\
& \times \left[\left(\nabla_{e_{2m-1}}f(x)\right)^2+\left(\nabla_{e_{2m}}f(x)\right)^2\right]  
 r_m^2 \exp\left(-\frac 12\sum_{k=1}^na_kr_k^2 \right) \prod_{j=1}^nr_j dr_j.
 \end{align*}
Making another change of variables $y_k=\frac 12a_kr_k^2, k=1,\ldots,n,$ we arrive at
\[
\mathcal J
= \frac{1}{(2\pi)^n}\prod_{j=1}^na_j \sum_{\mathbf k^\prime,\mathbf k^{\prime\prime}\in \mathcal K_I}\sum_{m=1}^{n} \frac{1}{a_m}  I_m(\mathbf k^\prime,\mathbf k^{\prime\prime}) \left[\left(\nabla_{e_{2m-1}}f(x)\right)^2+\left(\nabla_{e_{2m}}f(x)\right)^2\right],   
\]
where
\[ 
 I_m(\mathbf k^\prime,\mathbf k^{\prime\prime})=  \int_{\RR^{n}} y_m  \prod_{j=1}^n L_{k^\prime_{j}}\left(y_{j} \right)L_{k^{\prime\prime}_{j}}\left(y_{j}\right)  e^{-y_j}dy_1\ldots dy_n.
\] 

To show that $I_m(\mathbf k^\prime,\mathbf k^{\prime\prime})$ is given by \eqref{eq1.4}, we recall the orthogonality conditions 
\[
\int_0^{+\infty}L_k(x)L_\ell(x)e^{-x}dx=\delta_{k,\ell}
\]
and recurrence relations
\[
xL_k(x)=(2k+1)L_k(x)-kL_{k-1}(x)-(k+1)L_{k+1}(x),
\]
which imply
\[
\int_0^{+\infty}xL_k(x)L_\ell(x)e^{-x}dx=(2k+1)\delta_{k,\ell}-k\delta_{k-1,\ell}-(k+1)\delta_{k+1,\ell}.
\]
We infer that 
\begin{align*}
I_m(\mathbf k^\prime,\mathbf k^{\prime\prime}) = & \left( \prod_{j\neq m} \int_0^{+\infty}  L_{k^\prime_{j}}\left(y_{j} \right)L_{k^{\prime\prime}_{j}}\left(y_{j}\right)e^{-y_j}dy_j\right)\\ & \times  \int_0^{+\infty}  y_m L_{k^\prime_m}\left(y_m \right)L_{k^{\prime\prime}_m}\left(y_m\right) e^{-y_m}dy_m\\
 = & \prod_{j\neq m} \delta_{k^\prime_{j},k^{\prime\prime}_{j}}((2k^\prime_m+1)\delta_{k^\prime_m,k^{\prime\prime}_m}-k^\prime_m\delta_{k^\prime_m-1,k^{\prime\prime}_m}-(k^\prime_m+1)\delta_{k^\prime_m+1,k^{\prime\prime}_m}),
\end{align*} 
that proves \eqref{eq1.4}.

We conclude that 
\[
\mathcal J=\frac{1}{(2\pi)^n}\prod_{j=1}^na_j  |df(x)|_{I}^2
\]
and 
\begin{multline*}
\frac 12 \int_X |P_{p,I}(x, y)|^2_{h^{L^p}} (f(x)-f(y))^2\, dv_X(y)\\
=\frac 12p^{n-1}\left( \frac{1}{(2\pi)^n}\prod_{j=1}^na_j  |df(x)|_{I}^2+\mathcal O(p^{-\frac{1}{2}})\right).  
\end{multline*}
In view of \eqref{eq4.2}, integrating the last identity against $dv_X(x)$ and using the fact that $\prod_{j=1}^na_j(x)dv_X(x)= \Omega_{\mathbf B}(x)$, we prove \eqref{eq1.6}.
\end{proof}

In the proof of Theorem~\ref{th1.2}, taking into account that $\prod_{j=1}^na_j(x)=\sqrt{\det B_x}$, we also derived the formula
\begin{equation}\label{eq4.3}
|df(x)|_{I}^2=\frac{(2\pi)^n}{\sqrt{\det B_x}}\int_{T_xX} |\mathcal P_{I,x}(0, Z)|^2\left(df(x)(Z)\right)^2\, dv_{TX}(Z),
\end{equation}
that shows that the function $x\in X\mapsto |df(x)|_{I}^2$ is a well-defined continuous function on $X$, which is smooth if $f \in C^\infty(X)$.

\begin{proof}[Proof of Theorem~\ref{th1.3}] 
By \cite[Theorem 0.6]{Demailly85} (see also \cite[Corollary 3.3]{Demailly91}), the asymptotic behavior of $N_p$ as $p\to\infty$ is given by
\[
N_p\sim \frac{p^n}{(2\pi)^n} \int_X N(x,I)\Omega_{\mathbf B}(x), \quad p\to \infty,  
\]
where
\[
N(x,I)=\#\{\mathbf k : \Lambda_{\mathbf k}(x)\in I\}, \quad x\in X.
\]
Under current assumptions on $I$, we have $N(x,I)\equiv |\mathcal K_I|$, and 
\begin{equation}\label{eq4.4}
N_p\sim \frac{p^n}{(2\pi)^n}|\mathcal K_I|{\rm Vol}_{\mathbf B}(X), \quad p\to \infty.  
\end{equation}

By Theorem~\ref{th1.2} and \eqref{eq4.4}, for any $f\in L^1(X,dv_X)$, we have
\[
\lim_{p\to \infty} \mathbb E\left[\frac{1}{N_p}\mathcal N_p[f]\right] = \frac{1}{{\rm Vol}_{\mathbf B}(X)} \int_X f(x) \Omega_{\mathbf B}(x). 
\] 
Using \eqref{eq2.4}, we get
\[
\mathbb V[\mathcal N_p[f]] = \frac 12 \int_X \int_X |P_{p,I}(x, y)|^2_{h^{L^p}} (f(x)-f(y))^2\, dv_X(x)\, dv_X(y)
\]
\[
\leq 2 \sup_{x\in X} |f(x)|^2\int_X \int_X P_{p,I}(x, y)P_{p,I}(y, x) dv_X(x)\, dv_X(y) 
\]
\[
= 2 \sup_{x\in X} |f(x)|^2\int_X P_{p,I}(x, x)\, dv_X(x)= 2 \sup_{x\in X} |f(x)|^2N_p. 
\]
It follows that
\[
\mathbb V\left[\frac{1}{N_p}\mathcal N_p[f]\right]=\frac{1}{N^2_p} \mathbb V\left[\mathcal N_p[f]\right]\leq Cp^{-n}.
\]
The convergence in probability \eqref{eq1.8} then follows from the classical Chebyshev inequality
\[
\mathbb P(|Y-\mathbb E(Y)|>\varepsilon)\leq \frac{\mathbb V(Y)}{\varepsilon^2},
\] 
as in the standard proof of the weak law of large numbers for random variables with finite variance.
\end{proof}

\begin{proof}[Proof of Theorem~\ref{th1.4}]
This theorem is a straightforward consequence of Theorem \ref{th1.2}, the formula \eqref{eq4.4} and the following general result due to Soshnikov.

\begin{thm}[\cite{Soshnikov02}, Theorem 1] 
Let $(X,\mathcal F,P_L)$, $L \geq 0$, be a family of determinantal random point fields with Hermitian correlation kernels $K_L$. Suppose that $f_L,L \geq 0$, are bounded measurable functions with compact support such that
\[
\mathbb V_L[S_{f_L}]\to \infty, \quad L\to\infty,
\]
and
\[
\sup |f_L(x)| = o(\mathbb V_L [S_{f_L}])^\varepsilon,\quad  \mathbb E_L S_{|f|_L}=O\left((\mathbb V_L [S_{f_L}])^\delta\right)
\]
for any $\varepsilon > 0$ and some $\delta > 0$. Then the normalized linear statistics $$\frac{S_{f_L}- \mathbb E_L [S_{f_L}]}{\sqrt{\mathbb V_L [S_{f_L}]}}$$ converges in distribution to the standard normal law $N(0, 1)$.
\end{thm}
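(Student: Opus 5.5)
The quoted statement is Soshnikov's theorem \cite{Soshnikov02}, which the paper uses without proof. If I were to prove it, I would use the method of cumulants. Write $V_L=\mathbb V_L[S_{f_L}]$. It suffices to show that, for every fixed $m\geq 3$, the $m$-th cumulant $\kappa_m(S_{f_L})$ is $o(V_L^{m/2})$. Convergence of all cumulants of the normalized statistic to those of $N(0,1)$ then gives convergence of moments. Since the normal law is determined by its moments, this yields convergence in distribution.

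\textbf{Step 1: a formula for the cumulants.} For a determinantal process with Hermitian kernel $K$ and bounded compactly supported $f$, I would derive the cumulant formula from the Fredholm identity $\mathbb E[e^{tS_f}]=\det(1+(e^{tf}-1)K)$, recalled in Section~\ref{s2}. Here $K$ is locally trace class and $f$ has compact support, so all traces below are finite. Taking the logarithm and expanding in $t$ gives
\[
\kappa_m=\sum_{k=1}^m\frac{(-1)^{k-1}}{k}\sum_{\substack{m_1+\cdots+m_k=m\\ m_i\geq 1}}\frac{m!}{m_1!\cdots m_k!}\operatorname{Tr}\bigl(f^{m_1}Kf^{m_2}K\cdots f^{m_k}K\bigr).
\]
The combinatorial coefficients satisfy $\sum_{k}\frac{(-1)^{k-1}}{k}\sum\frac{m!}{m_1!\cdots m_k!}=0$ for $m\geq 2$, since this is the $t^m$ coefficient of $\log(1+(e^t-1))=t$. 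Hence I can subtract $\operatorname{Tr}(f^mK)$ from each term. This rewrites $\kappa_m$ as a combination of differences
\[
\operatorname{Tr}\bigl(f^{m_1}K\cdots f^{m_k}K\bigr)-\operatorname{Tr}\bigl(f^{m}K\bigr).
\]

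\textbf{Step 2: the key estimate (main obstacle).} I want to show
\[
|\kappa_m|\leq C_m\,\|f\|_\infty^{m-2}\,V,\qquad m\geq 2,
\]
where $V=\operatorname{Tr}(f^2K)-\operatorname{Tr}(fKfK)$ is the variance from Proposition~\ref{p2.2}. I would telescope each difference by moving one factor $K$ at a time. Each elementary step has the form $\operatorname{Tr}(A\,K(1-K)\,B)$, or $\operatorname{Tr}(A[f,K]B[f,K])$ with $\|A\|,\|B\|\leq\|f\|_\infty^{j}$. The first kind is controlled using $0\leq K\leq 1$. The second kind is controlled by Cauchy--Schwarz in the Hilbert--Schmidt norm, using
\[
\|[f,K]\|_2^2\leq 2V,\qquad \operatorname{Tr}\bigl(fK(1-K)f\bigr)\leq V.
\]
The difficulty is the bookkeeping needed to make every term carry exactly one such quadratic factor. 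If this proves awkward, my fallback is Soshnikov's own combinatorial regrouping: symmetrize over cyclic orderings so that the differences collapse into sums of $\operatorname{Tr}\bigl(g_1[K,f]g_2[K,f]\cdots\bigr)$.

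\textbf{Step 3: conclusion.} Assume the bound of Step 2. For $m\geq 3$ and any $\varepsilon>0$, the hypothesis $\sup|f_L|=o(V_L^{\varepsilon})$ gives
\[
\frac{|\kappa_m|}{V_L^{m/2}}\leq C_m\,V_L^{(m-2)\varepsilon+1-m/2}\to 0.
\]
Here I choose $\varepsilon<1/2$ and use $V_L\to\infty$. The hypothesis $\mathbb E_L S_{|f_L|}=O(V_L^\delta)$ serves as a cruder alternative bound. It gives
\[
|\kappa_m|\leq C_m\|f_L\|_\infty^{m-1}\operatorname{Tr}(|f_L|K_L)=O\bigl(V_L^{\delta+(m-1)\varepsilon}\bigr),
\]
which already settles all $m>2\delta+2$ without Step 2. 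Therefore Step 2 is needed only for the finitely many orders $3\leq m\leq 2\delta+2$.
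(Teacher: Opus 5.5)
The paper gives no proof of this statement. It quotes Soshnikov's theorem and uses it as a black box, checking the $\delta$-hypothesis with $\delta=n/(n-1)$, so the only comparison available is with the citation itself.

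Your cumulant argument is correct and self-contained, and it proves more than is stated. The bookkeeping you flag in Step 2 does close, as follows.

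First replace $K$ by $\chi_AK\chi_A$ with $A\supset\operatorname{supp}f$ compact. This changes neither the law of $S_f$ nor $V$, and it makes every trace honest. Then, for $k\ge 3$, set $g_i=f^{m_i}$, $D=K(1-K)$ and $R=R'K$ with $R=g_3K\cdots g_kK$. Merging the first two blocks gives
\[
\operatorname{Tr}(g_1Kg_2KR)-\operatorname{Tr}(g_1g_2KR)=-\operatorname{Tr}(g_1Dg_2R)-\operatorname{Tr}([g_2,K]R'g_1D)-\operatorname{Tr}\bigl([g_2,K]R'[K,g_1](1-K)\bigr).
\]
Each term contains two Hilbert--Schmidt factors of type $D^{1/2}f^{j}$ (or its adjoint) or $[f^{j},K]$. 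By your two inequalities, each such factor has norm at most $j\|f\|_\infty^{j-1}\sqrt{2V}$. The middle term mixes the two types, so it is not literally one of your two listed forms, but Cauchy--Schwarz handles it the same way. For the final merge (two blocks to one), use
\[
\operatorname{Tr}(g_1Kg_2K)-\operatorname{Tr}(g_1g_2K)=\tfrac12\operatorname{Tr}([g_1,K][g_2,K])-\operatorname{Tr}(g_1g_2D).
\]
Hence $|\kappa_m|\le C_m\|f\|_\infty^{m-2}V$ for all $m\ge 2$.

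This has a consequence worth noting. The theorem already holds assuming only $V_L\to\infty$ and $\sup|f_L|=o(V_L^{1/2})$. The hypothesis $\mathbb E_LS_{|f_L|}=O(V_L^\delta)$ becomes superfluous, and so does your crude-bound remark in Step 3. That hypothesis is what one needs if one only has the crude bound $|\kappa_m|\le C_m\|f\|_\infty^{m-1}\mathbb E S_{|f|}$ for large $m$. In that situation one must invoke a Marcinkiewicz--Janson type result: vanishing of all normalized cumulants of sufficiently high order already forces convergence to $N(0,1)$.

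For the paper's purposes the citation is enough, since that hypothesis is immediate in its setting.
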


The validity of these assumptions in the present setting when $n \geq 2$ (with $\delta =n/(n-1)$), follows directly from the asymptotics of the expectation and variance given by \eqref{eq1.5} and \eqref{eq1.6}.
\end{proof}

\section{Some remarks on the formula \eqref{eq1.6}}\label{s5}
In this section, we consider some particular cases of the asymptotic formula \eqref{eq1.6} for the variance. 

In the case when $|\mathcal K_I|=1$, that is, $\mathcal K_I$ consists of a single element $\mathbf k\in \mathbb Z_+^n$, we have 
\[
\alpha_m=I_m(\mathbf k,\mathbf k)=2k_m+1. 
\]
It follows that
\[
|df(x)|_{I}^2= \sum_{m=1}^{n} \frac{2k_m+1}{a_m(x)}   \left[\left(\nabla_{e_{2m-1}}f(x)\right)^2+\left(\nabla_{e_{2m}}f(x)\right)^2\right].
\]

In particular, when $\mathcal K_I$ consists of a single element $\mathbf k=0$, we have 
\[
|df(x)|_{I}^2= \sum_{m=1}^{n} \frac{1}{a_m(x)}   \left[\left(\nabla_{e_{2m-1}}f(x)\right)^2+\left(\nabla_{e_{2m}}f(x)\right)^2\right],
\]
that has the following geometric interpretation. 

Recall that $B_x : T_xX\to T_xX$, $x\in X$, be the skew-symmetric operator such that 
\[
\mathbf B_x(u,v)=g(B_xu,v), \quad u,v\in T_xX. 
\]
Let $J_x : T_xX\to T_xX$ be defined as $J_x=B_x(B^*_xB_x)^{-1/2}$. Then $J_x$ is an almost complex structure on $T_xX$ such that
\[
g(J_xu,J_xv)=g(u,v), \quad \mathbf B_x(J_xu,J_xv)=g(u,v), \quad u,v\in T_xX, 
\]
and the formula 
\[
g_{\mathbf B}(u,v)=\mathbf B_x(u,J_xv), \quad u,v\in T_xX, 
\]
defines a Riemannian metric on $TX$, called the Riemannian metric associated with $\mathbf B$ and $g$. It is clear that 
\[
g_{\mathbf B}(u,v)=g((B^*_xB_x)^{1/2}u,v), \quad u,v\in T_xX.
\]
Denote by $g^{-1}_{\mathbf B}$ the induced Riemannian metric on $T^*X$. Then it is easy to see that 
\[
|df(x)|_{I}^2=|df(x)|_{g^{-1}_{\mathbf B}}^2
\]
and 
\[
\mathbb V[\mathcal N_p[f]]=\frac{1}{4\pi}\frac{p^{n-1}}{(2\pi)^{n-1}}\int_X|df(x)|_{g^{-1}_{\mathbf B}}^2\,\Omega_{\mathbf B}(x)+ o(p^{n-1}), \quad p\to \infty.     
\]

Consider the case when $$a_1(x)=a_2(x)=\ldots=a_n(x)\left(=\frac{1}{n}\operatorname{Tr}(|B_x|)\right),\quad x\in X.$$
Then 
\[
\Sigma=\{2N+n: N\in \mathbb Z_+\}.
\]
Take a sufficiently small interval around $N$th Landau level $2N+n$, say, $$I_N=(2N+n-1,2N+n+1).$$ 
For the Landau Hamiltonian, this case corresponds to the pure $N+1$-analytic Ginibre point process (see \cite{HW19}). 

Then 
\[
\mathcal K_{I_N}:=\{\mathbf k\in \ZZ^n_+ : |\mathbf k|=N\}
\]
and 
\[
|\mathcal K_{I_N}|=\binom{N+n-1}{n-1}=\frac{(N+n-1)!}{N!(n-1)!}.
\]
It is clear that, for any $\mathbf k^\prime,\mathbf k^{\prime\prime}\in \mathcal K_I$, either $\mathbf k^\prime=\mathbf k^{\prime\prime}$ or $|\mathbf k^\prime-\mathbf k^{\prime\prime}|>1$. Therefore, we have
\[
\alpha_m=\sum_{|\mathbf k|=N} I_m(\mathbf k,\mathbf k)=\sum_{|\mathbf k|=N} (2k_m+1)
\]
It is easy to see that 
\[
\alpha_1=\alpha_2=\ldots=\alpha_n
\]
and 
\[
\alpha_1+\alpha_2+\ldots+\alpha_n=\sum_{|\mathbf k|=N} (2|\mathbf k|+n)=(2N+n)|\mathcal K_{I_N}|.
\]
We infer that
\begin{equation}\label{eq5.1}
\alpha_m=\frac{2N+n}{n}|\mathcal K_{I_N}|, \quad m=1,\ldots,n.
\end{equation}
and, therefore,
\begin{multline*}
|df(x)|_{I}^2
= (2N+n)|\mathcal K_{I_N}|\sum_{m=1}^{n} \frac{1}{\operatorname{Tr}(|B_x|)}\left[\left(\nabla_{e_{2m-1}}f(x)\right)^2+\left(\nabla_{e_{2m}}f(x)\right)^2\right]\\
= (2N+n)|\mathcal K_{I_N}|\frac{1}{\operatorname{Tr}(|B_x|)}|df(x)|^2_{g^{-1}}=\frac{2N+n}{n}|\mathcal K_{I_N}||df(x)|^2_{g^{-1}_{\mathbf B}}.
\end{multline*}
We get
\begin{equation}\label{eq5.2}
\mathbb V[\mathcal N_p[f]]=\frac{1}{4\pi}\frac{p^{n-1}}{(2\pi)^{n-1}}\frac{2N+n}{n}|\mathcal K_{I_N}| \int_X|df(x)|^2_{g^{-1}_{\mathbf B}}\,\Omega_{\mathbf B}(x)+ o(p^{n-1}).    
\end{equation}

Take $I=(n-1,2N+n+1)$. For the Landau Hamiltonian, this case corresponds to the full $N+1$-analytic Ginibre point process (see \cite{HW19}). We have
\[
\alpha_m=\sum_{j=0}^N\sum_{|\mathbf k|=j} I_m(\mathbf k,\mathbf k)+2\sum_{j=1}^N \sum_{|\mathbf k|=j} I_m(\mathbf k,\mathbf k-e_m).
\]
By \eqref{eq5.1}, we have
\[
\sum_{|\mathbf k|=j} I_m(\mathbf k,\mathbf k-e_m)=-\sum_{|\mathbf k|=j}k_m=-\frac{j}{n}|\mathcal K_{I_j}|.
\]
It follows that
\[
\alpha_m=\sum_{j=0}^N\frac{2j+n}{n}|\mathcal K_{I_j}|-2\sum_{j=1}^N \frac{j}{n}|\mathcal K_{I_j}|=\sum_{j=0}^N|\mathcal K_{I_j}|.
\]
We have
\[
\sum_{j=0}^N|\mathcal K_{I_j}|=\#\{\mathbf k\in \ZZ^n_+ : |\mathbf k|\leq N\}. 
\]
It is easy to see that 
\[
\#\{\mathbf k\in \ZZ^n_+ : |\mathbf k|\leq N\}=\#\{\mathbf k^\prime\in \ZZ^{n+1}_+ : |\mathbf k^\prime|= N\}=\binom{N+n}{n}=\frac{N+n}{n}|\mathcal K_{I_N}|.
\]
It follows that
\[
|df(x)|_{I}^2=\frac{N+n}{n}|\mathcal K_{I_N}||df(x)|^2_{g^{-1}_{\mathbf B}}
\]
and
\begin{multline}\label{eq5.3}
\mathbb V[\mathcal N_p[f]]=\frac{1}{4\pi}\frac{p^{n-1}}{(2\pi)^{n-1}}\frac{N+n}{n}|\mathcal K_{I_N}|\\ \times  \int_X|df(x)|^2_{g^{-1}_{\mathbf B}}\,\Omega_{\mathbf B}(x)+ o(p^{n-1}), \quad p\to \infty.     
\end{multline}

So we see that the variance in \eqref{eq5.2} is higher than in \eqref{eq5.3}. In fact, 
the variance in \eqref{eq5.3} is obtained by averaging the variances from each of the $N+1$ Landau levels, since
\[
\frac{1}{N+1}\sum_{j=0}^{N}(2j+n)=N+n.
\] 
This fact was observed in the polyanalytic case in \cite{HW19}.


\begin{thebibliography}{00}
\bibitem{Abreu23}
L. D. Abreu, Entanglement entropy and hyperuniformity of Ginibre and Weyl-Heisenberg ensembles. \textit{Lett. Math. Phys.} \textbf{113} (2023), no. 3, Paper No. 54, 14 pp.

\bibitem{APRT17}
L. D. Abreu, J. M. Pereira, J. L. Romero, and S. Torquato,  The Weyl-Heisenberg ensemble: hyperuniformity and higher Landau levels. \textit{J. Stat. Mech. Theory Exp.} \textbf{2017}, no. 4, Paper No. 043103, 16 pp.

\bibitem{AGZ10} G. W. Anderson, A. Guionnet, and O. Zeitouni, \textit{An introduction to random matrices,}
Cambridge Studies in Advanced Mathematics, vol. 118, Cambridge University Press, Cambridge, 2010. 

\bibitem{Berman14} 
R.J. Berman, Determinantal point processes and fermions on complex manifolds: large deviations
and bosonization. \textit{Comm. Math. Phys.} \textbf{327} (2014), 1--47. 

\bibitem{Berman18} 
R.J. Berman, Determinantal point processes and fermions on polarized complex manifolds: bulk universality, In \textit{Algebraic and analytic microlocal analysis,} Springer Proc. Math. Stat., vol. 269, Springer, Cham, 2018, pp. 341--393.

\bibitem{charles20b}
L. Charles, On the spectrum of nondegenerate magnetic Laplacians. \textit{Anal. PDE} \textbf{17} (2024), no. 6, 1907--1952.

\bibitem{Charles-Estienne20}
L. Charles and B. Estienne, Entanglement entropy and Berezin-Toeplitz operators. \textit{Comm.
Math. Phys.} \textbf{376} (2020), 521--554.

\bibitem{dai-liu-ma}
X. Dai, K. Liu, and X. Ma, 
On the asymptotic expansion of Bergman kernel. \textit{J. Differential Geom.} \textbf{72} (2006), 1--41.

\bibitem{DL25a}
A. Deleporte and G. Lambert,
Fluctuations of two-dimensional determinantal processes associated with Berezin-Toeplitz operators, preprint  arXiv:2506.11707 (2025).

\bibitem{Demailly85}
J.-P. Demailly, Champs magn\'{e}tiques et in\'{e}galit\'{e}s de {M}orse pour la {$d''$}-cohomologie \textit{Ann. Inst. Fourier (Grenoble)} \textbf{35} (1985), no. 4, 189--229.

\bibitem{Demailly91}
J.-P. Demailly, Holomorphic Morse inequalities. In \textit{Several complex variables and complex geometry, Part 2 (Santa Cruz, CA, 1989)}, Proc. Sympos. Pure Math., 52, Part 2, Amer. Math. Soc., Providence, RI, 1991, pp. 93--114.

\bibitem{FL22}
M. Fenzl and G. Lambert. Precise deviations for disk counting statistics of invariant
determinantal processes. \textit{Int. Math. Res. Not. IMRN,} \textbf{2022}, no. 10, 7420--7494. 

\bibitem{Ginibre65}
J. Ginibre, Statistical ensembles of complex, quaternion, and real matrices, \textit{J. Math. Phys.,} \textbf{6}
(1965), 440--449.

 \bibitem{Gu-Uribe}
V. Guillemin and A. Uribe, The Laplace operator on the $n$th tensor power of a line bundle: eigenvalues which are uniformly bounded in $n$. \textit{Asymptotic Anal.} \textbf{1} (1988), 105--113.


\bibitem{HH13}
A. Haimi and H. Hedenmalm, The polyanalytic Ginibre ensembles, \textit{J. Stat. Phys.,} \textbf{153} (2013), 10--47.

\bibitem{HW19}
A. Haimi and A. Wennman, A central limit theorem for fluctuations in Polyanalytic Ginibre ensembles. \textit{Int. Math. Res. Not. IMRN,} \textbf{2019}, no. 5,  1350--1372.

\bibitem{HKPV09} J. B. Hough, M. Krishnapur, Yu. Peres, and B. Vir\'{a}g, {Zeros of Gaussian analytic functions and determinantal point processes,} University Lecture Series, vol. 51,
American Mathematical Society, Providence, RI, 2009.

\bibitem{Ioos} 
L. Ioos, Partial Bergman kernels and determinantal point processes on K\"ahler manifolds, preprint arXiv:2511.20539 (2025), 35 pp.

\bibitem{Kor22}  
Yu. A.  Kordyukov, Semiclassical spectral analysis of the Bochner-Schr\"odinger operator on symplectic manifolds of bounded geometry, \textit{Anal. Math. Phys.} \textbf{12} (2022). no. 1, Paper No 22, 37 pp.

\bibitem{ko-ma-ma} Yu. A. Kordyukov, X. Ma, and G. Marinescu, 
Generalized Bergman kernels on symplectic manifolds of bounded geometry. \textit{Comm. Partial Differential Equations} \textbf{44} (2019), 1037--1071.

\bibitem{Lemoine22}
T. Lemoine. Determinantal point processes associated with Bergman kernels: construction
and limit theorems, preprint arXiv:2211.06955 (2022), 

\bibitem{ma-ma:book}
X. Ma and G. Marinescu, \textit{Holomorphic Morse inequalities and Bergman kernels.} Progress in Mathematics, 254. Birkh\"auser Verlag, Basel, 2007. 

\bibitem{ma-ma08} 
X. Ma and G. Marinescu, Generalized Bergman kernels on symplectic manifolds.\textit{Adv. Math.} \textbf{217} (2008), 1756--1815.

\bibitem{ma-ma15} X.~Ma and G.~Marinescu, 
Exponential estimate for the asymptotics of Bergman kernels. \textit{Math. Ann.} \textbf{362} (2015), no. 3-4, 1327--1347.

\bibitem{Mac75}
O. Macchi, The coincidence approach to stochastic point processes, \textit{Advances in Appl. Probability} \textbf{7} (1975), 83--122.  

\bibitem{Mac77} O. Macchi, The Fermion process --- a model of stochastic point process with repulsive points, In \textit{Transactions of the Seventh Prague Conference on Information Theory, Statistical Decision
Functions, Random Processes and of the Eighth European Meeting of Statisticians (Tech.
Univ. Prague, Prague, 1974), Vol. A,} Reidel, Dordrecht-Boston, Mass., 1977, pp. 391--398.

\bibitem{MMM23}
Z. Mouayn, M. Mahboubi, and O. El Moize, Mean and variance of the cardinality of particles in infinite true polyanalytic Ginibre processes via a coherent states quantization method. \textit{J. Stat. Mech. Theory Exp.} \textbf{2023}, no. 7, Paper No. 073103, 21 pp.

\bibitem{RV07}
B. Rider and B. Vir\'{a}g, The noise in the circular law and the Gaussian free field. \textit{Int. Math. Res. Not. IMRN} \textbf{2007}, no. 2, Art. ID rnm006, 33 pp.

\bibitem{shirai15}
T. Shirai, Ginibre-type point processes and their asymptotic behavior. \textit{J. Math. Soc. Japan,} \textbf{67} (2015), 763--787.

\bibitem{SoshnikovUMN}  
A. Soshnikov, Determinantal random point fields. (Russian); translated from \textit{Uspekhi Mat. Nauk} \textbf{55} (2000), no. 5(335), 107--160 \textit{Russian Math. Surveys} \textbf{55} (2000), no. 5, 923--975

\bibitem{Soshnikov02}  
A. Soshnikov, Gaussian limit for determinantal random point fields, \textit{Ann. Probab.} {30} (2002), no. 1, 171--187.

\end{thebibliography}
\end{document}